\newtheorem{thm}{Theorem}
\newtheorem{thm*}{Theorem}
\newtheorem{prop}{Proposition}[section]
\newtheorem{lma}{Lemma}
\newtheorem{cor}{Corollary}
\newtheorem{clm}{Claim}
\theoremstyle{definition}
\theoremstyle{remark}
\newtheorem{rmk}{Remark}[subsection]
\newcommand{\R}{{\mathbb{R}}}
\newcommand{\Z}{{\mathbb{Z}}}
\newcommand{\C}{{\mathbb{C}}}
\newcommand{\D}{{\mathbb{D}}}
\newcommand{\del}{\partial}
\newcommand{\sm}[1]{C^\infty(#1)}
\newcommand\vol{\operatorname{vol}}
\newcommand\sign{\operatorname{sign}}
\newcommand\lk{\operatorname{lk}}
\newcommand{\G}{\mathcal{G}}
\newcommand{\til}[1]{\widetilde{#1}}
\newcommand{\om}{\omega}
\newcommand{\eps}{\epsilon}
\newcommand{\cM}{\mathcal{M}}
\def\Sign{\operatorname{\textbf{Sign}}}
\DeclareMathOperator{\Diff}{\mathrm{Diff}}
\DeclareMathOperator{\Ham}{\mathrm{Ham}}
\DeclareMathOperator{\Cal}{\mathrm{Cal}}
\DeclareMathOperator{\id}{\mathbf{1}}
\def\o{\omega}
\begin{document}

\title{\textbf{The $L^p$-diameter of the group of area-preserving diffeomorphisms of $S^2$}}

\author{\textsc{Michael Brandenbursky and Egor Shelukhin}\\}

\date{}
\maketitle

\begin{abstract}
We show that for each $p \geq 1,$ the $L^p$-metric on the group of area-preserving diffeomorphisms of the two-sphere has infinite diameter. This solves the last open case of a conjecture of Shnirelman from 1985. Our methods extend to yield stronger results on the large-scale geometry of the corresponding metric space, completing an answer to a question of Kapovich from 2012. Our proof uses configuration spaces of points on the two-sphere, quasimorphisms, optimally chosen braid diagrams, and, as a key element, the cross-ratio map $X_4(\C P^1) \to \mathcal{M}_{0,4} \cong \C P^1 \setminus \{\infty,0,1\}$ from the configuration space of $4$ points on $\C P^1$ to the moduli space of complex rational curves with $4$ marked points.

\end{abstract}


\tableofcontents


\section{Introduction and main results}

\subsection{Introduction}

The $L^2$-length of a path of volume preserving diffeomorphisms, which describes a time-dependent flow of an ideal incompressible fluid, corresponds to the hydrodynamic action of the flow in the same way as the length of a path in a Riemannian manifold corresponds to its energy (cf. \cite{ShnirelmanGeneralized}). Indeed, it is the length of this path with respect to the formal right-invariant Riemannian metric on the group $\G$ of volume preserving diffeomorphisms introduced by Arnol'd in \cite{ArnoldGeodesics}. The $L^1$-length of the same path has a dynamical interpretation as the average length of a trajectory of a point under the flow.

Following the principle of least action, it therefore makes sense to consider the infimum of the lengths of paths connecting two fixed volume preserving diffeomorphisms. This gives rise to a right-invariant distance function (metric) on $\G.$ Taking the identity transformation as the initial point, Arnol'd observes that a path whose $L^2$-length is minimal (and equal to the distance) necessarily solves the Euler equation of an ideal incompressible fluid.

It follows from works of Ebin and Marsden \cite{EbinMarsden} that for diffeomorphisms in $\G$ that are $C^2$-close to the identity, the infimum is indeed achieved. Further, more global results on the corresponding Riemannian exponential map were obtained in \cite{EbinPrestonMisiolek},\cite{ShnirelmanRuled} (see \cite{EbinSymp} for additional references). In \cite{ShnirelmanGeometry,ShnirelmanGeneralized} Shnirel'man showed, among a number of surprising facts related to this subject, that in the case of the ball of dimension $3,$ the diameter of the $L^2$-metric is bounded. This result is known\footnote{The authors have not found a detailed proof of this generalization in the literature.} to hold for all compact simply connected manifolds of dimension $3$ or larger (see \cite{EliashbergRatiu, KhesinWendt, ArnoldKhesin}), while its analogue in the non-simply-connected case is false \cite{EliashbergRatiu,BrandenburskyLpMetrics}. Furthermore, Shnirel'man has conjectured that for compact manifolds of dimension $2,$ the $L^2$-diameter is infinite. 

In this paper we consider Shnirel'man's conjecture, and its analogues for $L^p$-metrics, with $p \geq 1.$ It follows from results of Eliashberg and Ratiu \cite{EliashbergRatiu} that on compact surfaces (possibly with boundary) other than $T^2$ and $S^2,$ Shnirel'man's conjecture holds for all $p \geq 1.$ Their arguments rely on the Calabi homomorphism $\Cal$ \cite{CalabiHomomorphism} from the compactly supported Hamiltonian group $\Ham_c(\Sigma,\sigma)$ to the real numbers in the case of a surface $\Sigma$ non-empty boundary ($\sigma$ is the area form), and on non-trivial first cohomology combined with trivial center of the fundamental group in the closed case. For the two-torus $T^2$ this Shnirel'man's conjecture also holds for all $p\geq 1$, as can be quickly seen by the following steps. First, the methods of \cite{EliashbergRatiu} together with the fact that the Hamiltonian group $\Ham(T^2,dx \wedge dy)$ is simply-connected as a topological space (see e.g. \cite[Chapter 7.2.B]{PolterovichBookGeometry}) imply that $\Ham(T^2,dx \wedge dy)$ with the $L^p$-metric has infinite diameter (compare \cite[Theorem 1.2]{BrandenburskyKedra2}). Second, the inclusion $\Ham(T^2,dx \wedge dy) \hookrightarrow \Diff_0(T^2,dx \wedge dy),$ the two groups being equipped with their respective $L^p$-metrics, is a quasi-isometry (see Proposition \ref{Proposition: torus qi}). The case of the two-sphere $S^2$, to which previous methods do not apply, remained open. 


The case $p>2$ (but not that of Shnirel'man's original conjecture!) is well-known, as it follows from a result of Polterovich  \cite{PolterovichS2} regarding Hofer's metric on $\Ham(S^2)$ by an application of the Sobolev inequality. The authors gave a different proof of this case by elementary methods in the preprint \cite{BrandenburskyESIHES}.

The main result of this paper is the unboundedness of the $L^p$-metric on $\Ham(S^2)$ for all $p \geq 1.$ This completes a full answer to Shnirel'man's question. Our methods extend to yield stonger results on the large-scale geometry on the $L^p$-metric on $\Ham(S^2).$ In particular, we provide bi-Lipschitz group monomorphisms of $\R^m$ endowed with the standard (say Euclidean) metric into $(\Ham(S^2),d_{L^p})$ for each for each positive integer $m$ and each $p \geq 1.$ Moreover, our key technical estimate implies by an argument of Kim-Koberda \cite{KimKoberdaAntiTrees} (cf. Crisp-Wiest \cite{CrispWiest}, Benaim-Gambaudo \cite{BenaimGambaudo}) the existence of quasi-isometric group monomorphisms from each right-angled Artin group to $(\Ham(S^2),d_{L^p})$ for each $p \geq 1$, completing the resolution of a question of Kapovich \cite{KapovichRAAGs} in the case of $S^2$ (the case $p > 2$ shown in Kim-Koberda uses \cite{BrandenburskyESIHES}).

Our methods are two-dimensional in nature, and have to do with braiding and relative rotation numbers of trajectories  of time-dependent two-dimensional Hamiltonian flows (in extended phase space). We note that Shnirel'man has proposed to use relative rotation numbers to bound from below the $L^2$-lengths of two-dimensional Hamiltonian paths in \cite{ShnirelmanGeneralized}. This direction is related to the method of Eliashberg and Ratiu by a theorem of Gambaudo and Ghys \cite{GambaudoGhysEnlacements} and Fathi \cite{FathiThese} (compare \cite{ESER}), stating that the Calabi homomorphism is proportional to the relative rotation number of the trajectories of two distinct points in the two-disc $\D$ under a Hamiltonian flow, averaged over the configuration space of ordered pairs of distinct points $(x_1,x_2)$ in the two-disc. 

This line of research was pursued in \cite{GambaudoLagrange}, \cite{BenaimGambaudo}, \cite{CrispWiest}, \cite{BrandenburskyLpMetrics}, \cite{BrandenburskyKedra1}, \cite{KimKoberdaAntiTrees}, obtaining quasi-isometric and bi-Lipschitz embeddings of various groups (right-angled Artin groups and additive groups of finite-dimensional real vector spaces) into $\Ham_c(\D^2,dx\wedge dy)$ and into $\ker(\Cal) \subset \Ham_c(\D^2,dx\wedge dy)$ endowed with their respective $L^p$-metrics (see \cite{BrandenburskyKedra2} for similar embedding results on manifolds with sufficiently complicated fundamental group). In all cases, the key technical estimate is an upper bound, via the $L^p$-length of an isotopy of volume-preserving diffeomorphisms, of the average, over all points in the manifold, of the word length in the fundamental group of the trace of the point under the isotopy (closed up to a loop by a system of short paths on the manifold). In this paper we produce similar estimates for the case of the two-sphere. Our case of $\Diff_0(S^2,\sigma),$ with $p \leq 2,$ is more difficult than that of ${\ker(\Cal) \subset \Ham_c(\D,dx\wedge dy)}$ because the required analytical and topological bounds require a more global approach and have to take into account the geometry and topology of the sphere.


In turn, lower bounds on the average word length can often be provided by quasimorphisms - functions that are additive with respect to the group multiplication - up to an error which is uniformly bounded (as a function of two variables). The quasimorphisms we use were introduced and studied by Gambaudo and Ghys in the beautiful paper \cite{GambaudoGhysCommutators} (see also \cite{PolterovichDynamicsGroups},\cite{PyTorus},\cite{PySurfacesQm},\cite{BrandenburskyKnots}). These quasimorphisms essentially appear from invariants of braids traced out by the action of a Hamiltonian path on an ordered $n$-tuple of distinct points in the surface (suitably closed up), averaged over the configuration space $X_n(\Sigma)$ of $n$-tuples of distinct points on the surface $\Sigma$. 

\textbf{Comparison with \cite{BrandenburskyESIHES}}

The first step in our study of Shnirel'man's conjecture is found in the unpublished preprint \cite{BrandenburskyESIHES}, where we saw which elements of the approach of \cite{BrandenburskyLpMetrics} extend to the case of $S^2.$ There we found that without a key new idea one could only obtain the necessary estimates for $p>2.$ The main novelty of this paper consists indeed of a new geometric idea, which is of independent interest. To wit, we introduce certain canonical "logarithmic" differential forms on $X_n(\C P^1),$ which play a key role in our arguments. These forms can be considered analogues for the case of $\C P^1$ of the differential forms of Arnol'd \cite{ArnoldColoredBraids} on $X_n(\C)$. One curious aspect of these forms is that while in Arnol'd's case they appeared from pairs of points, that is from the natural projections $X_n(\C) \to X_2(\C)$ on pairs of coordinates, in the case of $\C P^1$ they are constructed from quadruples of points, that is from projections $X_n(\C P^1) \to X_4(\C P^1)$ on quadruples of coordinates. This fits with $P_2(\C) = \pi_1(X_2(\C)) \cong \pi_1(\C \setminus \{0\}) = \Z$ and $P_4(\C P^1) = \pi_1(X_4(\C P^1)) \cong \Z/2\Z \times \pi_1(\C \setminus \{0,1\}) = \Z/2\Z \times (\Z * \Z)$ being the first infinite pure braid groups in the two cases.

\subsection{Preliminaries}

\subsubsection{The $L^p$-metric}

Let $M$ denote a smooth oriented manifold without boundary that is either closed, or $M = X \setminus \del X$ for a compact manifold $X.$ Let $M$ be endowed with a Riemannian metric $g$ and smooth measure $\mu$ (given by a volume form, which in our case that $M$ is a surface is an area form $\sigma,$ and orientation on $M$). We require $g$ and $\mu$ to extend continuously to $X$ in the second case. Finally denote by \[\G=\Diff_{c,0}(M,\mu)\] the identity component of the group of compactly supported diffeomorphisms of $M$ preserving the smooth measure $\mu.$

Fix $p\geq 1.$ For a smooth isotopy $\{\phi_t\}_{t \in [0,1]},$ from $\phi_0 = \mathrm{1}$ to $\phi_1 = \phi,$  we define the $L^p$-length by \[{l}_p(\{\phi_t\}) = \int_0^1  \left(\frac{1}{\vol(M,\mu)} \cdot \int_M |X_t|^p d\mu\right)^{\frac{1}{p}} \, dt \,,\] where $X_t = \frac{d}{dt'}|_{t'=t} \phi_{t'} \circ \phi_t^{-1}$ is the time-dependent vector field generating the isotopy $\{\phi_t\}$, and $|X_t|$ is its length with respect to the Riemannian structure on $M$. As is easily seen by a displacement argument, the $L^p$-length functional determines a non-degenerate norm on $\G$ by the formula \[d_p(\id,\phi) = \inf \; l_p(\{\phi_t \}).\] This in turn defines a right-invariant metric on $\G$ by the formula \[d_p(\phi_0,\phi_1) = d_p(\id,\phi_1 {\phi_0}^{-1}).\]

\begin{rmk}
Consider the case $p = 1.$ It is easy to see that the $L^1$-length of an isotopy is equal to the average Riemannian length of the trajectory $\{\phi_t (x)\}_{t \in [0,1]}$ (over $x \in M,$ with respect to $\mu$). Moreover for each $p \geq 1,$ by Jensen's (or H\"{o}lder's) inequality, we have \[l_p(\{\phi_t\}) \geq l_1(\{\phi_t\}).\]
\end{rmk}

Denote by $\til{\id}$ the identity element of the universal cover $\til{\G}$ of $\G.$ Similarly one has the $L^p$-pseudo-norm (that induces the right-invariant $L^p$-pseudo-metric) on $\til{\G}$, defined for $\til{\phi} \in \til{\G}$ as
\[d_p(\til{\id},\til{\phi}) = \inf \; l_p(\{\phi_t\}),\] where the infimum is taken over all paths $\{\phi_t\}$ in the class of $\til{\phi}.$ Clearly $d_p(\id,\phi) = \inf d_p(\til{\id},\til{\phi}),$ where the infimum runs over all $\til{\phi} \in \til{\G}$ that map to $\phi$ under the natural epimorphism $\til{\G} \to \G.$

Up to bi-Lipschitz equivalence of metrics ($d$ and $d'$ are equivalent if $\frac{1}{C}d \leq d' \leq C d$ for a certain constant $C>0$) the $L^p$-metric on $\G$ (and its pseudo-metric analogue on $\til{\G}$) is independent of the choice Riemannian structure and of the volume form $\mu$ on $M.$ In particular, the question of boundedness or unboundedness of the $L^p$-metric enjoys the same invariance property.

{\bf Terminology:} For a positive integer $n,$ we use $A,B,C > 0$ as generic notation for positive constants that depend only on $M,\mu,g$ and $n.$

\subsubsection{Quasimorphisms}

For some of our results, we require the notion of a quasimorphism. Quasimorphisms are a helpful tool for the study of non-abelian groups, especially those that admit few homomorphisms to $\R.$ A quasimorphism $r: G \to \R$ on a group $G$ is a real-valued function that satisfies
\[r(xy) = r(x) + r(y) + b_r(x,y),\]
for a function $b_r:G\times G \to \R$ that is uniformly bounded:
\[\delta(r): = \sup_{G\times G} |b_r| < \infty.\]

A quasimorphism $\overline{r}:G \to \R$ is called \textit{homogeneous} if $\overline{r}(x^k) = k \overline{r}(x)$ for all $x\in G$ and $k \in \Z$. In this case, it is additive on each pair $x,y \in G$ of commuting elements: $r(xy) = r(x) + r(y)$ if $xy = yx.$

For each quasimorphism $r:G\to \R$ there exists a unique homogeneous quasimorphism $\overline{r}$ that differs from $r$ by a bounded function: \[\sup_G |\overline{r} - r| < \infty.\]
It is called the \textit{homogenization} of $r$ and satisfies
\[\overline{r}(x) = \lim_{n \to \infty} \frac{r(x^n)}{n}.\]

Denote by $Q(G)$ the real vector space of homogeneous quasimorphisms on $G.$

For a finitely-generated group $G,$ with finite symmetric generating set $S,$ define the word norm $|\cdot |_S:G \to \Z_{\geq 0}$ by \[|g|_S = \min \{k\,|\, g = s_1 \cdot \ldots \cdot s_k,\,  \forall\, 1 \leq j \leq k, \, s_j \in S\}\] for $g \in G.$ This is a norm on $G,$ and as such it induces a right-invariant metric $d_S: G \times G \to \Z_{\geq 0}$ by $d_S(f,g) = |gf^{-1}|_S.$ This metric is called the word metric. In this setting, any quasimorphism $r :G \to \R$ is controlled by the word norm. Indeed, for all $g \in G,$ \[|r(g)| \leq \left(\delta(r) + \max_{s \in S} |r(s)|\right) \cdot |g|_S.\]

We refer to \cite{CalegariScl} for more information about quasimorphisms.

\subsubsection{Configuration spaces and braid groups}
For a manifold $M,$ which shall in this paper be usually of dimension $2$ and without boundary, the configuration space $X_n(M) \subset M^n$ of $n$-tuples of points on $M$ is defined as \[X_n(M) = \{(x_1,\ldots,x_n)|\; \displaystyle{x_i \neq x_j,}\;  1 \leq i < j \leq n\}.\] That is \[X_n(M) = M^n \setminus \bigcup_{1 \leq i < j \leq n} D_{ij}\] where for $1 \leq i < j \leq n,$ the partial diagonal $D_{ij} \subset M^n$ is defined as $D_{ij} = \{(x_1,\ldots,x_n)|\; x_i = x_j\}.$ Note that $D_{ij}$ is a submanifold of $M^n$ of codimension $\dim M.$ When $\dim M = 2$ and $M$ is endowed with a complex structure, $D_{ij}$ is a complex hypersurface. Therefore we shall sometimes refer to $D$ as a divisor. Indeed, complex coordinates serve an important role in our arguments.

Finally we define the pure braid group of $M$ as \[P_n(M) = \pi_1(X_n(M)).\] Noting that the symmetric group $S_n$ on $n$ elements acts on $X_n(M),$ we form the quotient $C_n(M) = X_n(M)/S_n$ and define the full braid group of $M$ as \[B_n(M) = \pi_1(C_n(M)).\] For smooth surfaces $M$ endowed with a complex structure (hence smooth complex manifolds of complex dimension $1$), $C_n(M)$ turn out to inherit the structure of a smooth complex manifold of complex dimension $n.$

We note that $P_n(M)$ and $B_n(M)$ enter the exact sequence $1 \to P_n(M) \to B_n(M) \to S_n \to 1.$ In particular $P_n(M)$ is a normal sugbroup of $B_n(M)$ of finite index. We refer to \cite{TuraevKassel} for further information about braid groups.

\subsubsection{Short paths and the Gambaudo-Ghys construction}

Given a real valued quasimorphism $r$ on $P_n(M) = \pi_1(X_n(M),q)$ for a fixed basepoint $q \in X_n(M)$ there is a natural way to construct a real valued quasimorphism on the universal cover $\widetilde{\G}$ of the group $\G=\Diff_0(S^2,\sigma)$ of area preserving diffeomoprhisms of $M=S^2$. We shall see that in our case of $M=S^2$ this induces a quasimorphism on $\G$ itself, because the fundamental group of $\G$ is finite. The construction is carried out by the following steps (cf. \cite{GambaudoGhysCommutators,PolterovichDynamicsGroups,BrandenburskyKnots}).

\begin{enumerate}
\item For all $x \in X_n(S^2) \setminus Z$, with $Z$ a closed negligible subset (e.g. a union of submanifolds of positive codimension) choose a smooth path $\gamma(x):[0,1] \to X_n(S^2)$ between the basepoint $q\in X_n(S^2)$ and $x$. Make this choice continuous in $ X_n(S^2) \setminus Z$. We first choose a system of paths on $M=S^2$ itself, in our case the minimal geodesics with respect to the round metric, and then consider the induced coordinate-wise paths in $M^n$, and pick $Z$ to ensure that these induced paths actually lie in $X_n(S^2)$. After choosing the system of paths $\{\gamma(x)\}_{x \in X_n(S^2)\setminus Z}$ we extend it measurably to $X_n(S^2)$ (obviously, no numerical values computed in the paper will depend on this extension). We call the resulting choice a "system of short paths". 


\item Given a path $\{\phi_t \}_{t \in [0,1]}$ in $\G$ starting at $Id$, and a point $x \in X_n(S^2)$ consider the path $\{\phi_t \cdot x\}$, to which we then catenate the corresponding short paths. That is consider the loop \[\lambda(x,\{\phi_t\}) := \gamma(x) \# \{\phi_t \cdot x\} \# \gamma(y)^{-1}\] in $X_n(S^2)$ based at $q,$ where $^{-1}$ denotes time reversal. Hence we obtain for each $x \in X_n(S^2)$ an element $[\lambda(x,\{\phi_t\})] \in \pi_1(X_n(S^2),q)$.

\item Consequently applying the quasimorphism $r:\pi_1(X_n(S^2),q) \to \R$ we obtain a measurable function $f:X_n(S^2) \to \R$. 	   Namely $f(x) = r([\lambda(x,\{\phi_t\})])$. The quasimorphism $\Phi$ on $\widetilde{\G}$ is defined by    \[\Phi([\{\phi_t\}]) = \int_{X_n(S^2)} f \,d \mu^{\otimes n}.\]
     It is immediate to see that this function is well-defined by topological reasons. The quasimorphism property follows by the quasimorphism property of $r$ combined with finiteness of volume. The fact that the function $f$ is absolutely integrable can be shown to hold a-priori by a reduction to the case of the disc. We note, however, that by Tonelli's theorem this fact follows as a by-product of the proof of our main theorem, and therefore requires no additional proof.
\item Of course our quasimorphism can be homogenized, to obtain a homogeneous quasimorphism $\overline{\Phi}$.
\end{enumerate}
\vspace{2mm}
\begin{rmk}
In our case, by the result of Smale \cite{Smale} $\pi_1(\G) = \Z/2\Z$, and hence the quasimorphisms descend to quasimorphisms on $\G$, e.g. by minimizing over the two-element fibers of the projection $\widetilde{\G} \to \G$. For $\overline{\Phi}$, the situation is easier since by homogeneity it vanishes on
$\pi_1 (\G) \subset Z(\til{\G})$, and therefore depends only on the image in $\G$ of an element in $\til{\G}$. We keep the same notations for the induced quasimorphisms.
\end{rmk}

\subsubsection{The cross-ratio map}
Recall that $S^2$ can be identified with $\C P^1,$ and the latter has an affine chart $u_0: \C \to \C P^1,$ $u_0(z) = [z,1],$ in homogeneous coordinates, whose image is the complement of the point $\infty:=[1,0]$.   

The cross-ratio map is given by the natural\footnote{Recall that the holomorphic automorphism group of $\C P^1$ is isomorphic to $\mathrm{PSL}(2,\C)$ acting by fractional-linear transformations.} projection $X_4(\C P^1) \to \mathcal{M}_{0,4} = X_4(\C P^1)/ \mathrm{PSL}(2,\C).$ Composing it with the isomorphism $\cM_{0,4} \cong \C P^1 \setminus \{\infty,0,1\} \cong \C \setminus \{0,1\}$ given by the inverse of the map $u \mapsto [(\infty,0,1,u)],$ we obtain a map \[cr:X_4(\C P^1) \to \C \setminus \{0,1\}.\] In other words $cr(x_1,x_2,x_3,x_4) = A(x_4)$ for the unique map $A \in \mathrm{PSL}(2,\C)$ with $A(x_1) = \infty,\; A(x_2) = 0,\; A(x_3) = 1.$

In homogeneous coordinates, for $(x_1,x_2,x_3,x_4) \in X_4(\C P^1)$ with $x_j = [z_j,w_j],\, 1 \leq j \leq 4,$ the map $cr$ is given by \[ cr(x_1,x_2,x_3,x_4) = \frac{(z_1 w_3 - z_3 w_1) (z_2 w_4 - z_4 w_2)}{(z_2 w_3 - z_3 w_2)(z_1 w_4 - z_4 w_1)}.\] In the affine chart $u_0 \times u_0 \times u_0 \times u_0$ it looks like $cr(z_1,z_2,z_3,z_4) = \frac{(z_1 - z_3) (z_2  - z_4)}{(z_2 - z_3)(z_1 - z_4)}.$

The cross-ratio map allows us to write down a diffeomorphism (in fact isomorphism of quasi-projective varieties) \[c: X_n(\C P^1) \xrightarrow{\sim} X_3(\C P^1) \times X_{n-3}(\C \setminus \{0,1\}) ,\] \[(\overrightarrow{x},y_1,\ldots, y_{n-3}) \mapsto (\overrightarrow{x},cr(\overrightarrow{x},y_1),\ldots, cr(\overrightarrow{x},y_{n-3})),\] where $\overrightarrow{x}=(x_1,x_2,x_3)$ denotes a point in $X_3(\C P^1)$ and $cr(\overrightarrow{x},y) = cr(x_1,x_2,x_3,y)$ is the cross-ratio map. Later we shall see that this diffeomorphism is precisely what makes the proofs work, as it allows one to use the affine structure on $\C \supset \C \setminus \{0,1\}$.

Note that $\pi_c := pr_2 \circ c: X_n(\C P^1) \to X_{n-3}(\C \setminus \{0,1\}),$ where $pr_2: X_3(\C P^1) \times X_{n-3}(\C \setminus \{0,1\}) \to {X_{n-3}(\C \setminus \{0,1\})}$ is the projection to the second factor, is simply a coordinate description of the natural projection $X_n(\C P^1) \to \mathcal{M}_{0,n} \cong X_n(\C P^1)/\mathrm{PSL}(2,\C).$

Finally, note that $c$ induces an isomorphism $c_{\#}: P_n(\C P^1) \to \Z/2\Z \times P_{n-3}(\C \setminus \{0,1\})$ on fundamental groups (recall that $P_3(\C P^1) \cong \pi_1(\mathrm{PSL}(2,\C)) \cong \Z/2\Z$).

\subsubsection{Differential $1$-forms on configuration spaces}

Using the isomorphism $c,$ we introduce special differential $1$-forms on $X_n (\C P^1),$ $n \geq 4,$ that we consequently use as an intermediate step in our results. Denote by $u_1,\ldots, u_{n-3}$ the affine coordinates on $ \C^{n-3} \supset (\C \setminus \{0,1\})^{n-3} \supset X_{n-3}(\C \setminus \{0,1\}).$ Define for $\nu \in I,$ for an index set $I = \{(i;0)\}_{1 \leq i \leq n-3} \cup \{(i;1)\}_{1 \leq i \leq n-3} \cup \{(ij)\}_{1\leq i\neq j \leq n-3},$ the $\R$-valued differential $1$-form on $X_{n-3}(\C \setminus \{0,1\})$ by \[\theta_\nu = \frac{1}{2\pi} \mathrm{Im}(\alpha_\nu),\] 
with \begin{align*}
\alpha_{i;0} &= \frac{du_i}{u_i},\\
\alpha_{i;1} &= \frac{d(u_i - 1)}{u_i - 1},\\
\alpha_{ij} &= \frac{d(u_i - u_j)}{u_i - u_j}.
\end{align*}

Finally define  \[\til{\theta}_\nu = (\pi_c)^* \theta_\nu \in \Omega^1(X_n(\C P^1),\R),\] for each $\nu \in I.$

For a $1$-form $\theta$ on a manifold $Y$ and a smooth parametrized path $\gamma:[0,1] \to Y$ set
\[\int_\gamma |\theta| := \int_0^1 |\theta_{\gamma(t)}(\dot{\gamma}(t))| dt.\]
Clearly, for a smooth loop $\gamma$ we have $|\int_\gamma \theta | \leq \int_\gamma |\theta |$. Moreover, $\int_{\gamma} |\theta| = \int_{\gamma^{-1}} |\theta|$, where $\gamma^{-1}$ is the time-reversal of $\gamma$.


\subsection{Main results}


Our main technical result is:

\medskip

\begin{thm}\label{Theorem: average wordlength bound}
For an isotopy $\overline{\phi} = \{\phi^t\}$ in $\G,$ the average word norm of a trajectory $\lambda(x,\overline{\phi})$ is controlled by the $L^1$-length of $\overline{\phi}:$

\[W(\overline{\phi}) = \int_{X_n(\C P^1)\setminus Z} |[\lambda(x,\overline{\phi})]|_{P_n(S^2)} \, d\mu^{\otimes n}(x) \leq A \cdot l_1(\overline{\phi}) +B,\] for certain constants $A,B > 0.$
\end{thm}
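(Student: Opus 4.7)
The plan is to combine a topological inequality bounding the braid word length by integrals of the canonical $1$-forms $\til{\theta}_\nu$ with an analytic estimate that converts these integrals into the $L^1$-length via Fubini and the measure-preserving property of $\phi_t$. The first step is purely topological: for every loop $\lambda$ in $X_n(\C P^1)\setminus Z$ based at $q$, I would establish
\[
|[\lambda]|_{P_n(S^2)} \;\leq\; A_1 \sum_{\nu \in I} \int_\lambda |\til{\theta}_\nu| \;+\; B_1,
\]
with universal $A_1, B_1$. Through the isomorphism $c_\# : P_n(\C P^1) \cong \Z/2\Z \times P_{n-3}(\C \setminus \{0,1\})$, this reduces to bounding word length in $P_{n-3}(\C \setminus \{0,1\})$. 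The latter is a Fadell--Neuwirth iterated extension of finitely generated free groups; in each layer the generating loops are dual to the forms $\theta_{i;0}, \theta_{i;1}, \theta_{ij}$. For each free-group layer, reduced word length is controlled above by the total unsigned angular variation $\frac{1}{2\pi}\int|d\theta|$ around the relevant punctures, by a sweep-counting argument: a reduced word of length $k$ forces the loop to perform at least $k$ unsigned sweeps across the generators. Summing over the layers of the tower gives the bound.

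Next, apply this inequality to $\lambda(x,\overline{\phi}) = \gamma(x)\#\{\phi_t x\}\#\gamma(\phi_1 x)^{-1}$ and split the line integrals into three pieces. The two short-path contributions $\int_{\gamma(x)}|\til{\theta}_\nu|$ and $\int_{\gamma(\phi_1 x)}|\til{\theta}_\nu|$ yield a finite constant after averaging in $x$: the forms have at most logarithmic singularities along the codimension-$2$ coordinate diagonals of $X_n(\C P^1)$, hence are $L^1$-integrable against $d\mu^{\otimes n}$, and the second term is handled by the measure-preserving change of variable $y = \phi_1 x$. These account for the additive constant $B$. The main isotopy contribution, via Fubini and the substitution $y = \phi_t x$ (valid because $\phi_t^{\times n}$ preserves $\mu^{\otimes n}$), reduces to the pointwise-in-$t$ estimate
\[
\int_{X_n(\C P^1)} |\til{\theta}_\nu(V_t(y))| \, d\mu^{\otimes n}(y) \;\leq\; C \int_{S^2} |X_t| \, d\mu,
\]
where $V_t(x_1,\ldots,x_n) = (X_t(x_1),\ldots,X_t(x_n))$ is the induced velocity. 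Using the pointwise bound $|\til{\theta}_\nu(V_t)| \leq \|\til{\theta}_\nu\|_g \, |V_t|_g \leq \|\til{\theta}_\nu\|_g \sum_j |X_t(x_j)|$ in the product spherical metric, and isolating each $x_j$ by Fubini, the estimate reduces to the uniform bound $\int_{(S^2)^{n-1}} \|\til{\theta}_\nu\|_g \prod_{k \neq j} d\mu \leq C$. This bound holds because the singular divisors of $\til{\theta}_\nu$ are all of the form $\{x_a = x_b\}$, with pole behaviour $\mathcal{O}(1/d_g(x_a,x_b))$, and $\int_{S^2} d\mu(z)/d_g(y,z) \leq C$ uniformly in $y \in S^2$ by the integrability of $1/r$ in real dimension $2$. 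Integrating in $t$ turns the bound into the $A \cdot l_1(\overline{\phi})$ contribution.

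The principal obstacle is the first step. Signed integrals of the $\til{\theta}_\nu$ detect only the abelianization of $P_n(S^2)$; to control reduced, non-abelian word length one must argue that the \emph{unsigned} integrals $\int_\lambda |\til{\theta}_\nu|$ upper-bound word length in the free-group factors of the Fadell--Neuwirth tower, which requires interpreting them as sweep counts. This sweep-counting argument is natural in the affine punctured setting $\C \setminus \{0,1\}$ but is obstructed by the global topology of the closed sphere. The cross-ratio projection $\pi_c : X_n(\C P^1) \to X_{n-3}(\C \setminus \{0,1\})$ and the pullback forms $\til{\theta}_\nu$ are exactly the new ingredient needed: they trade the compact sphere for an affine target where the sweep arguments and Arnold-type $1$-forms behave well, circumventing the obstruction that blocked the earlier approach in \cite{BrandenburskyESIHES}.
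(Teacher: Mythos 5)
Your two-step architecture is exactly the paper's: a pointwise topological inequality $|[\lambda]|_{P_n(S^2)} \leq A\sum_{\nu}\int_\lambda|\til{\theta}_\nu|+B$ (Proposition \ref{Prop: top}), followed by an averaged analytic estimate of $\int_{X_n}\int_{\lambda(x,\overline{\phi})}|\til{\theta}_\nu|$ via Fubini and area-preservation (Proposition \ref{Prop: analysis}). Your analytic half matches the paper closely: the reduction to a fixed-time estimate, the triangle inequality isolating each coordinate, and the integrability of the simple-pole singularities of $\til{\theta}_\nu$ along the partial diagonals (the paper implements the uniform bound $\|\til{\theta}_\nu\|\lesssim\sum 1/d(x_a,x_b)$ concretely by decomposing $(\C P^1)^n$ into products of hemispheres, where the divisors become $\{a-b=0\}$ or $\{ab-1=0\}$ and the key identities for $cr-1$ and $cr(\cdot,x_4)-cr(\cdot,x_5)$ express each $\til{\theta}_\nu$ as a sum of six logarithmic derivatives). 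Your treatment of the short-path terms by averaged integrability is workable, though the paper instead first replaces geodesic short paths by componentwise affine segments (Lemma \ref{Lemma:W and affine segments}, a braid-diagram comparison) precisely so that $\int_{\gamma'(x)}|\til{\theta}_\nu|\leq 3$ holds uniformly, since a linear function along an affine segment sweeps total angle at most $\pi$ about $0$.

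The one place where your route genuinely diverges, and where it is under-justified, is the proof of the topological inequality. The paper does not climb the Fadell--Neuwirth tower: it quasi-isometrically embeds $P_{n-3}(\C\setminus\{0,1\})$ into $P_{n-1}(\C)$ by adding constant strands at $0$ and $1$ (Lemma \ref{Lemma - adding strands q-i}, using the splitting by the center), passes to $B_{n-1}(\C)$ (finite index), and then bounds the word length in the half-twist generators by the number of crossings in a braid diagram projected in a direction $\om$ chosen by the co-area formula plus Chebyshev so that \emph{all} crossing numbers $n_{ij}(\om)$ are simultaneously controlled by $\int_\beta|\theta'_{ij}|$. Your layer-by-layer sweep count hides two real issues. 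First, ``a reduced word of length $k$ forces $k$ unsigned sweeps'' is the wrong mechanism: what one actually proves is that crossings with a \emph{generic} ray from each puncture spell out a word representing the class, and the unsigned integral only controls the \emph{average} over ray directions of the crossing number, so a Chebyshev-type selection of a common good direction is unavoidable. Second, and more seriously, ``summing over the layers of the tower'' is not a one-line step: word length in an iterated extension is not the sum of word lengths in the layers without an undistortion argument for each kernel, and the fiber free group at each stage is the fundamental group of a plane with \emph{moving} punctures, so converting $\int_\lambda|\theta_{ij}|$ into a crossing count for the fiber word requires trivializing the fibration along the loop and controlling the correction coming from the chosen lift of the base word. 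These can presumably be repaired, but as written they constitute the gap; the paper's single global braid diagram for all $n-1$ strands at once is what makes the argument close.
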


\medskip

\begin{rmk}
Note that $W(\overline{\phi})$ depends only on the class $\til{\phi} = [\overline{\phi}] \in \til{\G}$ of $\overline{\phi}$ in the universal cover $\til{\G}$ of $\G.$
\end{rmk}

Theorem \ref{Theorem: average wordlength bound} has a number of consequences concerning the large-scale geometry of the $L^1$-metric on $\G.$ Firstly, as any quasimorphism on a finitely generated group is controlled by the word norm, we immediately obtain the following statement.

\medskip

\begin{cor}\label{Corollary: GG Lip}
The homogenization $\overline{\Phi}$ of each Gambaudo-Ghys quasimorphism $\Phi$ satisfies $$|\overline{\Phi}(\phi)| \leq C \cdot d_1(\phi,1).$$
\end{cor}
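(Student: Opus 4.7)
The plan is to deduce the corollary from Theorem \ref{Theorem: average wordlength bound} by chaining together three standard ingredients: (i) the fact that every quasimorphism on a finitely generated group is controlled by the word norm, (ii) Tonelli/integration to pass from the pointwise bound to one on the averaged function $f$, and (iii) the homogenization limit to absorb the additive constant $B$. Recall that $P_n(S^2)$ is finitely generated; fix a finite symmetric generating set $S$ and let $C' = \delta(r) + \max_{s \in S}|r(s)|$, so that $|r(g)| \leq C' |g|_{S}$ for all $g \in P_n(S^2)$.

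First I would establish the non-homogenized statement. For any isotopy $\overline{\phi}=\{\phi^t\}$ in $\G$ representing $\til{\phi}\in\til{\G}$, the construction gives
\[
|\Phi(\til{\phi})| = \left| \int_{X_n(S^2) \setminus Z} r([\lambda(x,\overline{\phi})]) \, d\mu^{\otimes n}(x) \right| \leq C' \int_{X_n(S^2) \setminus Z} |[\lambda(x,\overline{\phi})]|_{P_n(S^2)} \, d\mu^{\otimes n}(x) = C' \cdot W(\overline{\phi}).
\]
Applying Theorem \ref{Theorem: average wordlength bound} gives $|\Phi(\til{\phi})| \leq C'A \cdot l_1(\overline{\phi}) + C'B$. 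Taking the infimum over all smooth isotopies in the class of $\til{\phi}$, then over lifts of $\phi\in\G$ (using the remark that $\pi_1(\G)=\Z/2\Z$, so the infimum is attained up to a uniformly bounded error on the values of $\Phi$), yields
\[
|\Phi(\phi)| \leq C'A \cdot d_1(\id,\phi) + B'
\]
for a new constant $B'$.

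Next I would homogenize. By the formula $\overline{\Phi}(\phi) = \lim_{k\to\infty} \Phi(\phi^k)/k$, the right-invariance of $d_1$ and the triangle inequality give $d_1(\id,\phi^k) \leq k \cdot d_1(\id,\phi)$. Combining this with the bound on $\Phi$ applied to $\phi^k$ yields
\[
\frac{|\Phi(\phi^k)|}{k} \leq C'A \cdot d_1(\id,\phi) + \frac{B'}{k},
\]
and passing to the limit $k\to\infty$ gives $|\overline{\Phi}(\phi)| \leq C \cdot d_1(\id,\phi)$ with $C = C'A$, which is the desired conclusion.

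I do not expect a genuine obstacle here: all the heavy lifting is inside Theorem \ref{Theorem: average wordlength bound} and the general fact that quasimorphisms are word-norm-Lipschitz. The only mild subtlety is keeping track of the distinction between $\G$ and $\til{\G}$ (handled by finiteness of $\pi_1(\G)$) and ensuring the homogenization limit exists, which is automatic from the quasimorphism property together with the $L^1$-bound giving a linear upper estimate on $|\Phi(\phi^k)|$.
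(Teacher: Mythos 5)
Your proposal is correct and follows exactly the route the paper intends: the paper derives this corollary ``immediately'' from Theorem \ref{Theorem: average wordlength bound} via the standard fact that a quasimorphism on a finitely generated group is Lipschitz with respect to the word norm, with the additive constant absorbed by homogenization and the passage from $\til{\G}$ to $\G$ handled by finiteness of $\pi_1(\G)$, just as you do. No gaps.
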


\medskip

By a theorem of Ishida \cite{Ishida}, the composition $Q(B_n(S^2)) \to Q(P_n(S^2)) \xrightarrow{GG} Q(\G),$ where the first arrow is the natural restriction map and the second is the Gambaudo-Ghys map, is an embedding. Hence for $n \geq 4,$ by results of Bestvina and Fujiwara \cite{BestvinaFujiwara}, $Q(\G)$ is an infinite-dimensional vector space. Thus by Corollary \ref{Corollary: GG Lip} the diameter of $\G$ with the $L^1$-distance is infinite.

\medskip

\begin{cor}\label{Corollary: diam}
The $L^1$-diameter of $\G$ is infinite.
\end{cor}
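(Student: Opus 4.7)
The plan is to deduce unboundedness of $d_1$ from the existence of a non-trivial homogeneous quasimorphism on $\G$ that is Lipschitz with respect to $d_1.$ The Lipschitz control is already supplied by Corollary \ref{Corollary: GG Lip}, so the remaining task is only to exhibit a Gambaudo-Ghys quasimorphism whose homogenization is non-zero, and then to iterate.

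Concretely, I would first fix $n \geq 4$ and invoke the result of Bestvina-Fujiwara to produce an infinite-dimensional space of homogeneous quasimorphisms on the full surface braid group $B_n(S^2).$ In particular, choose any non-zero $r \in Q(B_n(S^2))$ and restrict it to the finite-index subgroup $P_n(S^2) \trianglelefteq B_n(S^2);$ the restriction remains non-zero, since a homogeneous quasimorphism that vanishes on a finite-index subgroup vanishes identically (by homogeneity, $r(g) = \frac{1}{[B_n(S^2):P_n(S^2)]!} r(g^{[B_n(S^2):P_n(S^2)]!})$ with the latter argument lying in $P_n(S^2)$). Feeding $r|_{P_n(S^2)}$ into the Gambaudo-Ghys construction yields a quasimorphism $\Phi$ on $\til{\G},$ whose homogenization $\overline{\Phi}$ descends to $\G$ by the remark following the construction. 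Ishida's theorem ensures that the composition $Q(B_n(S^2)) \to Q(P_n(S^2)) \xrightarrow{GG} Q(\G)$ is injective, so $\overline{\Phi} \not\equiv 0.$

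With a non-zero $\overline{\Phi} \in Q(\G)$ in hand, pick $\phi \in \G$ with $\overline{\Phi}(\phi) \neq 0.$ By homogeneity, $\overline{\Phi}(\phi^k) = k\, \overline{\Phi}(\phi),$ which tends to infinity in absolute value as $k \to \infty.$ Corollary \ref{Corollary: GG Lip} then gives
\[
k\,|\overline{\Phi}(\phi)| \;=\; |\overline{\Phi}(\phi^k)| \;\leq\; C \cdot d_1(\phi^k, \id),
\]
so $d_1(\phi^k,\id) \to \infty,$ which is exactly the assertion that the $L^1$-diameter of $\G$ is infinite.

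There is essentially no obstacle remaining at this stage: the analytic heavy lifting is entirely packaged into Theorem \ref{Theorem: average wordlength bound} and its immediate consequence Corollary \ref{Corollary: GG Lip}, while the algebraic inputs (Ishida and Bestvina-Fujiwara) are cited black boxes. The only minor care is in the descent of $\Phi$ from $\til{\G}$ to $\G$ (handled by the remark on $\pi_1(\G) = \Z/2\Z$) and in verifying that the Ishida restriction from $B_n(S^2)$ to $P_n(S^2)$ preserves non-triviality, both of which are routine.
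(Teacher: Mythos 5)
Your proposal is correct and follows essentially the same route as the paper: both deduce infinite diameter by combining Bestvina--Fujiwara and Ishida's theorem to produce a non-zero homogeneous Gambaudo--Ghys quasimorphism on $\G$, and then apply the Lipschitz bound of Corollary \ref{Corollary: GG Lip} together with homogeneity along powers. Your added check that restriction to the finite-index normal subgroup $P_n(S^2)$ preserves non-triviality is correct but redundant, since Ishida's theorem as cited already asserts injectivity of the full composition.
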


\medskip

Considering certain special examples of Gambaudo-Ghys quasimorphisms, and their calculations for certain autonomous flows, as in \cite{BrandenburskyESIHES}, we find for each integer $k \geq 1$ a $k$-tuple of homogeneous Gambaudo-Ghys quasimorphisms $\{\overline{\Phi}_i\}_{1 \leq i \leq k}$ and a $k$-tuple of autonomous Hamiltonian flows (one-parameter subgroups) $\{\{\phi_i^t\}_{t \in \R}\}_{1 \leq i \leq k}$ such that $\overline{\Phi}_i(\phi_j^t) = t \delta_{ij}.$ This implies the following stronger statement.


\medskip

\begin{cor}\label{Corollary: vector spaces}
The metric group $(\G,d_1)$ admits a bi-Lipschitz group monomorphism from $(\R^k,d)$ where $d$ is any metric on $\R^k$ induced by a vector-space norm.
\end{cor}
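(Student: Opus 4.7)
\begin{pfs}
The plan is to exhibit the embedding as a product of commuting one-parameter subgroups, using the quasimorphisms $\overline{\Phi}_j$ as approximate coordinates on the image. First I would arrange the autonomous flows $\{\phi_i^t\}$ provided by the cited construction to pairwise commute by supporting their generating Hamiltonians in pairwise disjoint embedded disks $D_1,\ldots,D_k\subset S^2$; this is compatible with the orthogonality relation $\overline{\Phi}_i(\phi_j^t)=t\delta_{ij}$ because the relevant Gambaudo-Ghys quasimorphisms can be tuned to detect braiding localized in a prescribed region of the sphere. Then I would define
\[\Psi:\R^k\to\G,\qquad \Psi(t_1,\ldots,t_k) \;=\; \phi_1^{t_1}\cdot\phi_2^{t_2}\cdots\phi_k^{t_k},\]
which is a group homomorphism by commutativity of the factors.

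For the upper Lipschitz bound I would observe that each autonomous subpath $\{\phi_i^s\}_{s\in[0,t_i]}$ has $L^1$-length $|t_i|\cdot c_i$, where $c_i = \frac{1}{\vol(S^2,\sigma)}\int_{S^2}|X_i|\,d\sigma$ depends only on the vector field $X_i$ generating $\phi_i^t$. Concatenating the $k$ isotopies and applying the triangle inequality for $d_1$ gives
\[d_1(\id,\Psi(\vec t)) \;\le\; \sum_{i=1}^k c_i|t_i| \;\le\; C\,\|\vec t\|,\]
for any fixed vector-space norm $\|\cdot\|$ on $\R^k$, by equivalence of norms.

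For the lower bound I would exploit that each homogeneous quasimorphism $\overline{\Phi}_j$ is additive on pairwise commuting elements, so
\[\overline{\Phi}_j(\Psi(\vec t)) \;=\; \sum_{i=1}^k \overline{\Phi}_j(\phi_i^{t_i}) \;=\; \sum_{i=1}^k t_i\delta_{ij} \;=\; t_j.\]
Combining this identity with Corollary \ref{Corollary: GG Lip} yields $|t_j|\le C\cdot d_1(\id,\Psi(\vec t))$ for each $j$, hence $\|\vec t\|\le C'\cdot d_1(\id,\Psi(\vec t))$ after taking the maximum over $j$ and invoking equivalence of norms. In particular $\Psi$ is injective, since $\Psi(\vec t)=\id$ forces $t_j=\overline{\Phi}_j(\id)=0$ for every $j$. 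Right-invariance of $d_1$ then promotes the identity-based estimates to the full bi-Lipschitz inequality via $d_1(\Psi(\vec s),\Psi(\vec t)) = d_1(\id,\Psi(\vec t-\vec s))$.

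The main obstacle is the very first step: verifying that the special Gambaudo-Ghys quasimorphisms and autonomous flows constructed following \cite{BrandenburskyESIHES} can be realized so that the Hamiltonians are supported in pairwise disjoint disks while simultaneously satisfying the sharp pairing identity $\overline{\Phi}_i(\phi_j^t)=t\delta_{ij}$. Once this localization is in place, the rest of the argument is a routine combination of the Lipschitz property of Gambaudo-Ghys quasimorphisms from Corollary \ref{Corollary: GG Lip}, the additivity of homogeneous quasimorphisms on commuting elements, and right-invariance of $d_1$.
\end{pfs}
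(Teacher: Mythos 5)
Your bi-Lipschitz argument is the paper's argument: the paper likewise defines $I(v)=f_{v_1,\omega_1}\circ\cdots\circ f_{v_k,\omega_k}$ for commuting autonomous flows with disjointly supported Hamiltonians, obtains the upper Lipschitz bound from the $L^1$-lengths of the individual one-parameter subgroups together with the triangle inequality, and obtains the lower bound from Corollary \ref{Corollary: GG Lip} combined with additivity of homogeneous quasimorphisms on commuting elements and the pairing identity; right-invariance then gives the full bi-Lipschitz statement. The one point where you diverge---and which you correctly single out as the main obstacle---is the production of the dual family. The paper does \emph{not} ``tune the quasimorphisms to detect braiding localized in a prescribed region'': the Gambaudo--Ghys quasimorphisms are averages over the entire configuration space $X_n(S^2)$ and do not admit an obvious localization to a sub-disk, so that step of your plan would be genuinely problematic if pursued literally. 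Instead the paper keeps the quasimorphisms global and localizes only the flows. Concretely, it takes the signature quasimorphisms $\overline{\Sign}_{2n}$, evaluates them on the radial twist maps $f_\omega$ by the explicit Gambaudo--Ghys formula
\[
\overline{\Sign}_{2n}(f_\omega)=\frac{n}{2}\int_{-1}^{1}\bigl(u^{2n-1}-u\bigr)\,\widetilde{\omega}(u)\,du,
\]
chooses profiles $\omega_1,\ldots,\omega_k$ whose Hamiltonians have pairwise disjoint (annular) supports---which gives commutativity---and such that the $k\times k$ matrix $\bigl(\overline{\Sign}_{2i+2}(f_{1,\omega_j})\bigr)$ is non-singular, and only then defines $\overline{\Phi}_1,\ldots,\overline{\Phi}_k$ as the linear combinations of $\overline{\Sign}_4,\ldots,\overline{\Sign}_{2k+2}$ dual to these flows, i.e.\ satisfying $\overline{\Phi}_i(f_{t,\omega_j})=t\delta_{ij}$. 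In other words, the duality is achieved after the fact by linear algebra on a fixed family of global quasimorphisms, not by localizing the quasimorphisms themselves. With that substitution for your first step, the rest of your proposal coincides with the paper's proof.
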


\medskip

Moreover, by an argument of Kim and Koberda \cite{KimKoberdaAntiTrees} (cf. Benaim-Gambaudo \cite{BenaimGambaudo} and Crisp-Wiest \cite{CrispWiest}), Theorem \ref{Theorem: average wordlength bound} implies the following statement, finishing an answer to a question of Kapovich \cite{KapovichRAAGs} in the case of $S^2$.  

\medskip

\begin{cor}\label{Corollary: Kapovich}
The metric group $(\G,d_1)$ admits a quasi-isometric group embedding from each right-angled Artin group endowed with the word metric.  
\end{cor}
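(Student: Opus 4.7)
The plan is to follow the scheme of Kim--Koberda \cite{KimKoberdaAntiTrees}, adapted from the two-disc to the two-sphere by means of Theorem~\ref{Theorem: average wordlength bound}. Fix a right-angled Artin group $A_\Gamma$ on a finite simplicial graph $\Gamma$. The first ingredient is Koberda's embedding theorem, refined in Kim--Koberda, which furnishes for sufficiently large $n$ a quasi-isometric group monomorphism $\iota \colon A_\Gamma \hookrightarrow P_n(S^2)$, where $P_n(S^2)$ is equipped with the word metric for some finite generating set.

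The second ingredient is a geometric realisation of $\iota$. For each generator $v \in V(\Gamma)$ I would construct $\phi_v \in \G$ as the time-$1$ map of a Hamiltonian isotopy $\{\phi_v^t\}$ supported in a small embedded disc $U_v \subset S^2$, arranged so that (i) $U_v \cap U_w = \emptyset$ whenever $\{v,w\}$ is an edge of $\Gamma$, forcing $[\phi_v,\phi_w] = 1$ and hence the existence of a homomorphism $\Psi \colon A_\Gamma \to \G$, $v \mapsto \phi_v$; and (ii) for some basepoint $q \in X_n(S^2)$ chosen in advance, the trajectory braid $[\lambda(q,\overline{\Psi(w)})] \in P_n(S^2)$ agrees with $\iota(w)$ up to a uniformly bounded error, for every word $w$ in the standard generators. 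The upper bound is then immediate: since each $\phi_v$ has finite $d_1$-norm, $d_1(\id, \Psi(w)) \leq K \cdot |w|_{A_\Gamma}$ for a constant $K$ depending only on $\Gamma$.

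For the lower bound, continuity in $x$ propagates the agreement in (ii) to an open neighbourhood of $q$. Hence for all $x$ in a set $V \subset X_n(S^2)\setminus Z$ of positive measure, using the quasi-isometric property of $\iota$,
\[ |[\lambda(x, \overline{\Psi(w)})]|_{P_n(S^2)} \geq c \cdot |w|_{A_\Gamma} - d. \]
Integrating over $V$ and invoking Theorem~\ref{Theorem: average wordlength bound} yields
\[ c \cdot \mu^{\otimes n}(V) \cdot |w|_{A_\Gamma} - d \cdot \mu^{\otimes n}(V) \;\leq\; W(\overline{\Psi(w)}) \;\leq\; A \cdot d_1(\id,\Psi(w)) + B, \]
so that $d_1(\id,\Psi(w)) \geq c' \cdot |w|_{A_\Gamma} - d'$. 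Combined with the upper bound this shows $\Psi$ is a quasi-isometric group embedding.

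The principal obstacle is arranging (ii) of the geometric realisation: one must choose the supports $U_v$, the internal dynamics and the basepoint $q$ in a coordinated way so that a \emph{fixed} configuration continues to trace the prescribed braid $\iota(w)$ for every reduced word $w$ in $A_\Gamma$, not merely for a single generator. This is essentially the content of the Kim--Koberda construction on the disc, and its transfer to $S^2$ is straightforward provided the Lipschitz-type input for quasimorphisms on $\G$ is available; that input is exactly Corollary~\ref{Corollary: GG Lip}, which in turn follows from Theorem~\ref{Theorem: average wordlength bound}. A small auxiliary point is to verify that reorganising the homomorphism $\Psi$ and the basepoint so that the pointwise braid realisation survives the commutation relations imposed by $\Gamma$ can be done without destroying condition (i); this is where the construction from \cite{KimKoberdaAntiTrees} is used essentially verbatim.
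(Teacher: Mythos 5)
Your proposal matches the paper's approach: the paper proves this corollary only by citing the Kim--Koberda argument together with Theorem~\ref{Theorem: average wordlength bound}, and your outline (geometric realisation of the RAAG generators by diffeomorphisms with the prescribed support pattern, upper bound from finiteness of the generators' $d_1$-norms, lower bound by integrating the trajectory word length over a positive-measure set of configurations and invoking Theorem~\ref{Theorem: average wordlength bound}) is precisely that argument transplanted to $S^2$. You correctly identify the one delicate point --- that the positive-measure set $V$ must be chosen independently of the word $w$, which is what the Kim--Koberda construction of the supports and basepoint guarantees --- so the proposal is sound.
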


\medskip

\begin{rmk}
We note that Corollary \ref{Corollary: Kapovich} implies Corollary \ref{Corollary: diam}, providing the latter with a proof that does not use quasimorphisms.
\end{rmk}

Finally, the arguments in \cite{BrandShelAut} combined with Corollary \ref{Corollary: GG Lip} imply the following.

\medskip

\begin{cor}
For each positive integer $k,$ the complement in $\G$ of the set $Aut^k$ of products of at most $k$ autonomous diffeomorphisms contains a ball of any arbitrarily large radius in the $L^1$-metric.
\end{cor}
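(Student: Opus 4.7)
My plan is to transfer the autonomous-norm argument of \cite{BrandShelAut} to the $L^1$-setting, where the only previously missing ingredient was Lipschitz control of Gambaudo--Ghys quasimorphisms with respect to $d_1$ --- now supplied by Corollary \ref{Corollary: GG Lip}. The underlying idea is: if one can exhibit $\phi_T \in \G$ whose vector of quasimorphism values is far, in $\R^N$, from the analogous vector of any product of at most $k$ autonomous elements, then Lipschitzness converts this distance in ``quasimorphism-value space'' into an $L^1$-distance that can be made arbitrarily large.

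Concretely, for a fixed $k$, I would borrow from \cite{BrandShelAut} an integer $N=N(k)>k$ together with homogeneous Gambaudo--Ghys quasimorphisms $\overline{\Phi}_1,\ldots,\overline{\Phi}_N$ on $\G$ satisfying the duality $\overline{\Phi}_i(\phi_j^t)=t\,\delta_{ij}$ for a chosen $N$-tuple of autonomous flows $\phi_j^t$ (in the spirit of Corollary \ref{Corollary: vector spaces}), along with the key structural statement that on every autonomous element the combined map $\Psi := (\overline{\Phi}_1,\ldots,\overline{\Phi}_N)\colon\G\to\R^N$ takes values within a universally bounded distance of a single coordinate axis. Combined with quasi-additivity of the $\overline{\Phi}_i$, this forces the image $\Psi(Aut^k)$ to lie in a bounded neighborhood (with constant depending only on $k$ and on the defects $\delta(\overline{\Phi}_i)$) of the union $\Sigma_k\subset\R^N$ of all at-most-$k$-dimensional coordinate subspaces.

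I would then take $\phi_T := \phi_1^T\cdot\phi_2^T\cdots\phi_N^T$ for a large parameter $T$. By the duality relations and the quasi-additivity applied $N$ times, $\Psi(\phi_T) = T(1,1,\ldots,1)+O(1)$, whose Euclidean distance from $\Sigma_k$ is at least $T\sqrt{N-k}-O(1)$. Applying Corollary \ref{Corollary: GG Lip} coordinate-wise, $\Psi$ is $(C\sqrt{N})$-Lipschitz from $(\G,d_1)$ to $(\R^N,\|\cdot\|_2)$ for a constant $C$ depending on the $\overline{\Phi}_i$. Hence for any $\psi\in Aut^k$,
\[ d_1(\phi_T,\psi)\;\ge\;\frac{\|\Psi(\phi_T)-\Psi(\psi)\|_2}{C\sqrt{N}}\;\ge\;\frac{T\sqrt{N-k}-O(1)}{C\sqrt{N}}. \]
Given any prescribed $R>0$, one picks $T$ so that this lower bound exceeds $R$; the $d_1$-ball of radius $R$ around $\phi_T$ is then disjoint from $Aut^k$, which is the assertion.

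The only non-routine step is the structural statement invoked in the second paragraph, which I would import verbatim from \cite{BrandShelAut}. What is essential --- and what makes the transfer to $d_1$ possible --- is that this statement is purely quasimorphism-theoretic: it depends only on quasi-additivity of the $\overline{\Phi}_i$ and on the duality $\overline{\Phi}_i(\phi_j^t)=t\,\delta_{ij}$, with no intrinsic dependence on the metric. My sole new input is the Lipschitz bound for $d_1$, which is exactly what Corollary \ref{Corollary: GG Lip} provides.
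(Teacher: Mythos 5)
There is a genuine gap, and it sits exactly where you flag the ``only non-routine step'': the structural statement you propose to import from \cite{BrandShelAut} is false, and it is not what that reference supplies. If the flows $\phi_j^t$ are generated by disjointly supported autonomous Hamiltonians $H_j$ (as in the construction behind Corollary \ref{Corollary: vector spaces}, which is how the duality $\overline{\Phi}_i(\phi_j^t)=t\,\delta_{ij}$ is actually arranged), then for any $t_1,\dots,t_N$ the composition $\phi_1^{t_1}\circ\cdots\circ\phi_N^{t_N}$ is the time-one map of the single time-independent Hamiltonian $\sum_j t_j H_j$, hence lies in $Aut=Aut^1$. So $\Psi(Aut)$ already contains, up to bounded error, all of $\R^N$, and is certainly not confined to a neighborhood of the union of coordinate subspaces; worse, your test element $\phi_T=\phi_1^T\circ\cdots\circ\phi_N^T$ is itself autonomous, so the ball you construct around it meets $Aut^k$ at its own center. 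More fundamentally, no statement constraining $\Psi$ on \emph{all} autonomous elements can be ``purely quasimorphism-theoretic'', i.e.\ follow from quasi-additivity plus the values on $N$ chosen flows: $Aut$ is closed under rescaling of the generating Hamiltonian and under products of disjointly supported elements, and these two operations alone defeat any separation argument based on quasimorphisms dual to flows.

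The input actually needed from \cite{BrandShelAut} is of a different nature: a nonzero homogeneous Gambaudo--Ghys quasimorphism $\overline{\Phi}$ on $\G$ that vanishes on (or at least is uniformly bounded on) \emph{every} autonomous diffeomorphism --- this is the main construction of that paper and rests on dynamical properties of autonomous flows, not on formal quasimorphism identities. Given such a $\overline{\Phi}$ with defect $D=\delta(\overline{\Phi})$, quasi-additivity yields $|\overline{\Phi}|\leq (k-1)D + kD'$ on $Aut^k$ (with $D'$ the bound on $Aut$), while Corollary \ref{Corollary: GG Lip} and right-invariance give $|\overline{\Phi}(\phi)-\overline{\Phi}(\psi)|\leq C\, d_1(\phi,\psi)+D$. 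Choosing $\phi$ with $|\overline{\Phi}(\phi)|$ exceeding $(k-1)D+kD'+CR+D$ --- possible since a nonzero homogeneous quasimorphism is unbounded --- makes the $d_1$-ball of radius $R$ about $\phi$ disjoint from $Aut^k$. Your final Lipschitz step is sound; what fails is the separation of $\Psi(Aut^k)$ from your test element.
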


\medskip

\begin{rmk}\label{Remark: Jensen 1 to p}
Let $p \geq 1.$ Note that since, by Jensen's (or H\"{o}lder's) inequality, \[d_1 \leq d_p,\] all the above results for $d_1$ continue to hold for $d_p.$
\end{rmk}


\subsection{Outline of the proof}

Theorem \ref{Theorem: average wordlength bound} is an immediate consequence of the following lemma and two propositions. The lemma states that for our purposes two different choices of short paths are equivalent.

\medskip

\begin{lma}\label{Lemma:W and affine segments}
Choosing as short paths the component-wise affine segments $\gamma'(x)$ in the chart $u_{0} \times \ldots \times u_0: \C^n \to \C P^n$ to the basepoint, obtain from the isotopy $\{\phi^t\}$ another family of loops $\lambda'(x,\{\phi^t\})$ for $x \in X_n(\C P^1) \setminus Z',$ for a different negligible subset $Z',$ and hence another average word norm function $W'(\overline{\phi}) = \int_{X_n(\C P^1)\setminus Z'} |[\lambda'(x,\overline{\phi})]|_{P_n(S^2)}.$  Then $|W(\{\phi\}) - W'(\{\phi \})| \leq C$ for a constant $C$ depending only on the systems of paths. 
\end{lma}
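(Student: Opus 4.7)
The plan is to compare the two loops via a \emph{discrepancy loop} $\delta(x) := \gamma(x) \# \gamma'(x)^{-1}$ based at $q$, defined for a.e.\ $x \in X_n(\C P^1)$ (namely, on the complement of $Z \cup Z'$). A direct rearrangement of the concatenations defining $\lambda$ and $\lambda'$ gives, in $P_n(S^2) = \pi_1(X_n(\C P^1), q)$,
\[ [\lambda(x, \overline{\phi})] = [\delta(x)] \cdot [\lambda'(x, \overline{\phi})] \cdot [\delta(y)]^{-1}, \qquad y := \phi^1 \cdot x. \]
Applying the triangle inequality for the word norm pointwise, integrating over $x \in X_n(\C P^1) \setminus (Z \cup Z')$, and using that the diagonal action of $\phi^1$ on $(S^2)^n$ preserves $\mu^{\otimes n}$ in order to perform the change of variables $x \mapsto y$ in one of the two resulting terms, one gets
\[ |W(\overline{\phi}) - W'(\overline{\phi})| \;\leq\; 2 \int_{X_n(\C P^1) \setminus (Z \cup Z')} |[\delta(x)]|_S \, d\mu^{\otimes n}(x). \]

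The right-hand side no longer involves $\overline{\phi}$: it is determined entirely by the two systems of short paths. Thus it suffices to show this integral is finite. The map $x \mapsto [\delta(x)] \in P_n(S^2)$ is locally constant on the complement of the null set $Z \cup Z'$, since both $\gamma(x)$ and $\gamma'(x)$ depend continuously on $x$ outside their respective bad sets. So the integrand is measurable and piecewise constant, and it will be enough to verify that $|[\delta(x)]|_S$ is essentially bounded, by a constant depending only on the two path systems (and on $n$).

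The main obstacle is establishing this essential boundedness. Both systems of paths are tamely defined: a minimal geodesic on $S^2$ has length at most $\pi$ times the radius, and the image on $S^2$ of an affine segment in the chart $u_0$ is a round arc of bounded complexity in the complement of $\infty$. The class $[\delta(x)] \in P_n(S^2)$ is encoded by its pairwise braiding data, which can be computed as the intersection numbers of the closed contours $\gamma_i(x_i) \# \gamma'_i(x_i)^{-1}$ on $S^2$ with the coordinate trajectories of indices $j \neq i$. Elementary surface topology shows that these intersection numbers are uniformly bounded for $x \in X_n(\C P^1) \setminus (Z \cup Z')$: each such contour bounds two regions on $S^2$, and the bound is supplied by the number of other coordinates $x_j$ lying in the smaller of these regions, which is at most $n-1$ away from the loci where contours degenerate (which are absorbed into $Z \cup Z'$). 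Hence $|[\delta(x)]|_S$ is bounded by a constant $C$ depending only on the systems of paths, giving the stated inequality.
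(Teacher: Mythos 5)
Your reduction to a uniform bound on the word norm of the discrepancy loop $\delta(x) = \gamma(x)\#\gamma'(x)^{-1}$ is exactly the paper's strategy, and the bookkeeping ($[\lambda(x,\overline{\phi})] = [\delta(x)][\lambda'(x,\overline{\phi})][\delta(y)]^{-1}$ with $y = \phi^1\cdot x$, triangle inequality, change of variables by measure preservation) is fine --- if anything, slightly more careful than the paper's own phrasing. The gap is in the last step, which is the actual content of the lemma. The assertion that ``the class $[\delta(x)]\in P_n(S^2)$ is encoded by its pairwise braiding data'' is false for $n\ge 4$: $P_n(S^2)$ is non-abelian, and intersection or winding numbers of the coordinate contours on $S^2$ only see abelianized information. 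They neither determine the braid class nor bound its word norm: a high power of a commutator of pure braid generators has trivial pairwise data but word length tending to infinity, and conversely two strands whose traces on the sphere are disjoint can still braid arbitrarily (one strand may wind many times around a stationary one without their traces ever meeting, since braiding is a phenomenon in $S^2\times[0,1]$, not on $S^2$). So counting intersections of contours on the surface is not the right quantity, and the ``elementary surface topology'' paragraph does not produce an upper bound on $|[\delta(x)]|_S$.

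What does work --- and is what the paper does --- is to bound the number of crossings in an honest braid diagram of the geometric braid $\delta(x)$: project in a generic direction $\omega\in S^1$ in the affine chart, a crossing occurring at each time $t$ where two strands align with $\omega$. Since spherical geodesics stereographically project to circular arcs (away from a further negligible set $Z''$ of those $x$ for which they become rays) and the $\gamma'$ strands are affine segments, each pair of strands contributes a uniformly bounded number of crossings, so $[\delta(x)]$ is represented by a word of length at most $5\binom{n}{2}$ in the half-twist generators of $B_n(\C)$. One then passes to $P_n(\C)$ using that the finite-index inclusion $P_n(\C)\hookrightarrow B_n(\C)$ is a quasi-isometric embedding (Lemma \ref{Lemma - finite index q-i}), and to $P_n(\C P^1) = P_n(S^2)$ using that the latter is a quotient of $P_n(\C)$. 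Your final paragraph needs to be replaced by an argument of this kind.
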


\medskip

The first proposition is a purely topological fact about the word norm of the classes of loops in the fundamental group of the configuration space.

\medskip

\begin{prop}\label{Prop: top}
Let $\lambda$ be a piecewise $C^1$ loop in $X_n(S^2)$ based at $q.$ Let $S$ be a finite generating set of $P_n(S^2).$  The word norm of the class $[\lambda] \in \pi_1(X_n(S^2),q) \cong P_n(S^2)$ with respect to $S$ satisfies 

\[ |[\lambda]|_{S} \leq A_0 \cdot \sum_{\nu \in I} \int_{\lambda} |\til{\theta}_\nu| + B_0,\] for constants $A_0,B_0 > 0$ depending only on $S$ and on $n.$ 

\end{prop}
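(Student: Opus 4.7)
My plan is to reduce the inequality, via the cross-ratio isomorphism and the iterated Fadell--Neuwirth fibration, to a one-dimensional statement: the word length of a loop's class in the free fundamental group of a punctured plane is controlled by the total angular variation around each puncture.

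For the first reduction I would use $c_\#: P_n(\C P^1) \xrightarrow{\sim} \Z/2\Z \times P_{n-3}(\C\setminus\{0,1\})$. The $\Z/2\Z$ factor contributes at most a constant to the word norm, absorbed into $B_0$. Since $\til\theta_\nu = (\pi_c)^*\theta_\nu$ is pulled back via $\pi_c$, which induces projection onto the second factor on fundamental groups, it suffices to bound the word norm of $[\pi_c \circ \lambda]$ in $P_{n-3}(\C\setminus\{0,1\})$. Next, I would use the Fadell--Neuwirth fibration $p_k: X_k(\C\setminus\{0,1\}) \to X_{k-1}(\C\setminus\{0,1\})$ with fiber $\C\setminus\{0,1,u_1,\ldots,u_{k-1}\}$, whose fundamental group is free of rank $k+1$; the affine structure on $\C$ provides a section. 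Iterating from $k = n-3$ down exhibits $P_{n-3}(\C\setminus\{0,1\})$ as an iterated semidirect product of free groups, with a natural finite generating set $S_0$ built from small loops around the punctures in each fiber. Comparability of word norms for finite generating sets lets me work with $S_0$ rather than the given $S$. Decomposing the class $[\pi_c \circ \lambda] = g_{n-3} \cdots g_1$ with each $g_k$ in the free group $F_{k+1}$ of the $k$-th fiber, I obtain $|[\pi_c \circ \lambda]|_{S_0} \leq \sum_k |g_k|_{F_{k+1}}$.

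The heart of the argument will be the following one-dimensional angular bound: for a piecewise $C^1$ loop $\gamma$ in $\C \setminus \{p_1, \ldots, p_r\}$, its class in $F_r$ satisfies
\[ |[\gamma]|_{F_r} \leq C \sum_i \int_\gamma \Big|\tfrac{1}{2\pi}\, d\arg(z-p_i)\Big| + C, \]
with constants depending only on the geometry. I would prove this by fixing a system of disjoint smooth arcs from the basepoint to each puncture whose complement is simply connected, homotoping $\gamma$ to be transverse to these arcs, and reading off a word representing $[\gamma]$ from the sequence of transverse crossings. The number of crossings with the arc ending at $p_i$ is bounded, up to a multiplicative constant, by the total angular variation of $\arg(z-p_i)$ along $\gamma$, since each transverse crossing forces a definite angular change around $p_i$.

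Applying this estimate in each fiber after straightening via the section, each $|g_k|_{F_{k+1}}$ becomes bounded by a constant multiple of $\int_\lambda |\til\theta_{k;0}| + \int_\lambda |\til\theta_{k;1}| + \sum_{j<k} \int_\lambda |\til\theta_{kj}|$ plus an additive constant, and summing over $k$ yields the stated inequality with $A_0, B_0$ depending only on $n$ and $S$. The main obstacle I anticipate is the error analysis when straightening the fiber along the section: this deformation distorts the geometry of the fiber, but the pairwise forms $\til\theta_{kj}$ are designed precisely so that $\int_\lambda \til\theta_{kj}$ equals the relative winding of the $k$-th and $j$-th strands, capturing exactly the correction needed for the moving punctures, which should make the bookkeeping tractable.
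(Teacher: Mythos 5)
Your first reduction (using $c_{\#}$ to pass to $P_{n-3}(\C\setminus\{0,1\})$ and absorbing the $\Z/2\Z$ factor into $B_0$) matches the paper, but the core of your argument has two genuine gaps. First, your one-dimensional lemma is true but your proof of it fails: for a \emph{fixed} system of arcs to the punctures, the number of transverse crossings of $\gamma$ with the arc to $p_i$ is \emph{not} bounded by the total variation of $\arg(z-p_i)$ along $\gamma$ --- a loop can cross a fixed arc back and forth $N$ times at distance $d$ from $p_i$ with angular cost only about $N\eps/d$, so no single crossing ``forces a definite angular change.'' What saves the lemma is choosing the direction of the cut arcs \emph{generically}: by the co-area formula the crossing count $n_i(\om)$ with the ray from $p_i$ in direction $\om$ satisfies $\int_{S^1} n_i(\om)\,dm(\om) = \int_\gamma |\tfrac{1}{2\pi}d\arg(z-p_i)|$, and Chebyshev's inequality then produces one $\om$ working simultaneously for all punctures (and all pairs of strands). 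This generic-direction/co-area/Chebyshev step is the central device of the paper's proof and is exactly what is missing from yours.

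Second, and more structurally, the Fadell--Neuwirth combing forces you to straighten each fiber loop via a trivialization over the moving base, and the quantity you then need to control is the \emph{unsigned} angular variation of the straightened loop around each puncture $u_j^0$. What the forms $\til{\theta}_{kj}$ control in a trivialization-independent way is only the \emph{signed} relative winding (the exponent sum of the generator $x_j$ in $g_k$); the word length of $g_k$ in a free group is not bounded by exponent sums (a commutator has all exponent sums zero), and the unsigned variation after straightening depends on the trivialization and can exceed $\int_\lambda|\til{\theta}_{kj}|$ by an uncontrolled amount. Your closing remark that the forms ``capture exactly the correction needed'' conflates these signed and unsigned quantities, which is precisely where the difficulty lies. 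The paper avoids the issue entirely by never combing: it adds two constant strands at $0,1$ to land in $X_{n-1}(\C)$, passes to $B_{n-1}(\C)$ via two quasi-isometric-embedding lemmas (the splitting of the center for the added strands, and finite index for $P_{n-1}\subset B_{n-1}$), and then reads a single braid word off one generically chosen diagram, with one half-twist generator per crossing, so that all pairwise interactions are handled symmetrically and simultaneously without any choice of fiberwise trivialization.
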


The second lemma is purely analytical and relies on the fact that we work with area-preserving diffeomorphisms, as well as on the fact that the differential forms we consider have integrable singularities near the divisors of $(\C P^1)^n$ that we excise to obtain $X_n(\C P^1).$

\medskip

\begin{prop}\label{Prop: analysis}
There exist constants $A_1,B_1 > 0$ depending only on $n,$ such that for each $\nu \in I,$ \[\int_{X_n(\C P^1)\setminus Z} \left(\int_{\lambda(x,\overline{\phi})} |\til{\theta}_\nu|\right)  \, d\mu^{\otimes n}(x) \leq A_1 \cdot l_1(\overline{\phi}) +B_1.\]
\end{prop}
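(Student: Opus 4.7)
The plan is to split the loop $\lambda(x,\overline\phi) = \gamma(x) \# \{\phi^t \cdot x\}_{t=0}^{1} \# \gamma(\phi_1(x))^{-1}$ into its three segments and estimate the three resulting averages separately: the two short-path averages will be finite constants (contributing to $B_1$), while the trajectory average will produce the linear term $A_1 \cdot l_1(\overline\phi)$.

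For the trajectory piece, Tonelli's theorem interchanges the integral over $x \in X_n(\C P^1)$ with the $t$-integral, and, since the diagonal action of $\phi^t$ on $(\C P^1)^n$ preserves $\mu^{\otimes n}$ and stabilizes $X_n(\C P^1)$, the substitution $y = \phi^t \cdot x$ reduces the bound, for each fixed $t$, to the pointwise-in-time estimate
\[
J(X) := \int_{X_n(\C P^1)} |\til\theta_\nu|_y\bigl(X(y_1),\ldots,X(y_n)\bigr)\, d\mu^{\otimes n}(y) \leq C \int_{S^2} |X|\, d\mu
\]
for any vector field $X$ on $S^2$, with $C$ depending only on $n$ and $\nu$. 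Applying this with $X = X_t$ and integrating in $t$ then yields the $A_1 \cdot l_1(\overline\phi)$ piece.

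To prove $J(X) \leq C\|X\|_{L^1}$, I compute $\til\theta_\nu = \pi_c^*\theta_\nu$ in the affine chart by logarithmic differentiation of $u_k = \frac{(x_1-x_3)(x_2-x_{k+3})}{(x_2-x_3)(x_1-x_{k+3})}$. For each generator type $\theta_{k;0}, \theta_{k;1}, \theta_{kl}$, the resulting $1$-form, contracted against the diagonal vector $(X(y_1),\ldots,X(y_n))$, decomposes into a bounded $\R$-linear combination of terms of the form $|X(y_j)|/|y_a - y_b|$, with $(a,b,j)$ running over a finite collection of indices inside $\{1,2,3,k+3,l+3\}$. Fubini then closes the estimate: the two-dimensional local integrability $\int_{S^2} d\mu(y)/|y - y_0| \leq C_0$, uniform in $y_0$, means that integrating the two diagonal variables $y_a, y_b$ over $(S^2)^2$ contributes a finite constant, while the factor $|X(y_j)|$ is integrated against the remaining coordinates to give $\|X\|_{L^1(S^2)}$ times a finite volume factor. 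The same pointwise bound on $\til\theta_\nu$, together with the uniform bound $|\dot\gamma(x)_i| \leq \pi$, handles the short-path averages: they reduce to $L^1$ integrals of $\int_0^1 |\gamma(x)_a(s) - \gamma(x)_b(s)|^{-1}\,ds$ over $x$, whose only singularities --- located on the codimension-two locus where the minimal geodesics $[q_a,x_a]$ and $[q_b,x_b]$ intersect --- are at most logarithmic, hence integrable on $(S^2)^n$. The second short-path integral reduces to the first via the measure-preserving change of variables $y = \phi_1(x)$.

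The main obstacle is the analytic estimate on $J(X)$. A priori, logarithmic differentiation of cross-ratios could produce terms $|X(y_c)|/|y_a - y_b|$ whose singular kernel involves many coordinates and whose $|X|$-factor is attached to a variable ``far away'' from the diagonal; in that case Fubini would yield only an $L^\infty$ bound on $X$ rather than the $L^1$ bound needed to match $l_1(\overline\phi)$. The specific pole structure of $du_k/u_k$, $d(u_k - 1)/(u_k - 1)$, and $d(u_k - u_l)/(u_k - u_l)$ after pullback by the cross-ratio coordinate change is precisely what keeps the index sets small enough for Fubini to close: in every singular term the indices $(a,b,j)$ lie in the short set $\{1,2,3,k+3,l+3\}$, so the remaining coordinates can be integrated out trivially. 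Verifying this index restriction is where the cross-ratio map plays its essential analytical role and where the technical core of the proof lies.
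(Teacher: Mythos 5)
Your overall strategy matches the paper's: split off the short paths, use Tonelli and area--preservation to reduce to a fixed-time estimate $J(X)\leq C\,\|X\|_{L^1(S^2)}$, and control the pulled-back logarithmic forms through the explicit pole structure of cross-ratio differences. However, there is a genuine gap in the core analytic step, precisely where you declare victory. You work in the single affine chart $u_0^{\times n}:\C^n\to(\C P^1)^n$ and claim that the contraction of $\til{\theta}_\nu$ against $(X(y_1),\dots,X(y_n))$ is a bounded combination of terms $|X(y_j)|/|y_a-y_b|$. But $|y_a-y_b|$ there is the Euclidean difference of chart coordinates, while the chart representation of the vector field satisfies $|\widehat{X}(u)|_{Eucl}=(1+|u|^2)\,|X|_{S^2}$; the conformal factor $(1+|u|^2)$ is unbounded on the chart. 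Tracking it through your Fubini step, after integrating the kernel in $y_b$ you are left with something like $\int(1+|u_j|)\,|X|_{S^2}\,d\mu$, which is \emph{not} controlled by $\|X\|_{L^1(S^2)}$ (and if instead you interpret $|y_a-y_b|$ as spherical distance, the "bounded coefficients" claim itself fails, since $|u_a-u_b|$ and $d_{S^2}(y_a,y_b)$ differ by the same unbounded factor). So the estimate does not close. This degeneration at infinity --- not the index bookkeeping you single out as "the main obstacle" --- is the real difficulty, and it is exactly what forces the key construction in the paper.

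The paper's resolution is to decompose $(\C P^1)^n$ into the $2^n$ measure-disjoint products of hemispheres $H_\eps$ and pull back to $\D^n$ via charts in which the spherical and Euclidean metrics are uniformly comparable ($\tfrac12|\cdot|\le|\cdot|_{Sph}\le|\cdot|$ on $\D$). The price is that the divisors are then cut out by $a_i-a_j$ when $\eps_i=\eps_j$ but by $a_ia_j-1$ when $\eps_i\neq\eps_j$, and the cross-ratio identities for $cr-1$ and $cr(\cdot,x_4)-cr(\cdot,x_5)$ are needed to check that each $(\overline{e}_\eps)^*\til{\theta}_\nu$ is a signed sum of at most six logarithmic derivatives of such functions. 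The $a_ia_j-1$ kernel then requires its own estimate (via the inversion $a^*=1/a$ and the splitting $\{|a^*|\le2\}\cup\{|a^*|\ge2\}$), which has no counterpart in your write-up. Separately, your treatment of the short paths (spherical geodesics, with an integrated logarithmic singularity over a bad locus) is only sketched and more delicate than necessary; the paper sidesteps it by working with component-wise affine segments, for which $\int_{\gamma'(x)}|dh/h|\le\pi$ holds pointwise for every linear $h$, and by invoking the separate lemma comparing the two short-path systems at the level of word length.
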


\subsection*{Acknowledgements.}

Part of this work has been carried out during Brandenbursky's stay
at IHES and CRM Montreal. He wishes to express his gratitude to both institutes.
Brandenbursky was supported by CRM-ISM fellowship and NSF grant No. 1002477.

This work was carried out during Shelukhin's stay in CRM Montreal, ICJ Lyon 1, Institut Mittag Leffler,
and IAS. He thanks these institutions for their warm hospitality. He was supported by CRM-ISM fellowship,
FP7-IDEAS-ERC Grant no. 258204 RealUman, Mittag Leffler fellowship, and NSF grant No. DMS-1128155.

We thank Leonid Polterovich and Pierry Py for useful comments on the manuscript.

We thank Mikhail Belolipetsky, Octav Cornea, Steven Hurder, Jake Solomon, and Leonid Polterovich, for the possibility to speak on a preliminary version of these results on seminars in IMPA, CRM, UIC, Hebrew University, and the University of Chicago.

\section{Proofs.}

\begin{proof}[Proof of Lemma \ref{Lemma:W and affine segments}]
We note that for any negligible subset $Z''$ of $X_n(\C P^1),$ \[W(\overline{\phi}) = \int_{X_n(\C P^1)\setminus (Z \cup Z' \cup Z'')} |[\lambda(x,\overline{\phi})]|_{P_n(\C P^1)} \, d\mu^{\otimes n}(x)\] and \[W'(\overline{\phi}) = \int_{X_n(\C P^1)\setminus (Z \cup Z' \cup Z'')} |[\lambda'(x,\overline{\phi})]|_{P_n(\C P^1)} \, d\mu^{\otimes n}(x),\] whether these integrals are finite or not (simply by the definition of the Lebesgue integral).

Hence it is sufficient to show that there exists a constant $C$ depending only on the systems of paths and a negligible subset $Z''$ of $X_n(\C P^1),$ such that for each $x \in X_n(\C P^1) \setminus (Z \cup Z' \cup Z'') = \C^n \setminus ((\C^n \cap Z) \cup (\C^n \cap Z') \cup (\C^n \cap Z''))$ we have $||[\lambda'(x,\overline{\phi})]|_{P_n(\C P^1)} - |[\lambda(x,\overline{\phi})]|_{P_n(\C P^1)}| \leq C.$

And indeed we see that $[\lambda(x,\overline{\phi})] = [\delta(x)]^{-1}[\lambda'(x,\overline{\phi})][\delta(x)],$ for $\delta(x) = \gamma(x)\# \overline{\gamma}'(x)$ and $[\delta(x)]_{P_n(\C P^1)} \leq C,$ as can be seen by direct calculation on braid diagrams in $\C.$ Indeed, as spherical geodesics map to circular arcs or affine rays under stereographic projection, and the latter happens for $x$ in a negligible subset $Z''$ of $\C^n \setminus ((\C^n \cap Z) \cup (\C^n \cap Z'))$, considering for $x \in X_n(\C P^1) \setminus (Z \cup Z' \cup Z'')$ the diagram of the geometric braid $\delta(x)$ in a generic direction $\om \in S^1$, we see that it has at most $\displaystyle 4 {n \choose 2} + {n \choose 2}$ crossings, corresponding to the $\gamma(x)$ and $\overline{\gamma}'(x)$ parts of the geometric braid. Therefore $[\delta(x)]_{B_n(\C)} \leq \displaystyle 5 {n \choose 2}.$ However, $[\delta(x)]_{P_n(\C)} \leq A \cdot [\delta(x)]_{B_n(\C)} + B$ for constants $A,B > 0$ (see Lemma \ref{Lemma - finite index q-i} below), and obviously $[\delta(x)]_{P_n(\C P^1)} \leq [\delta(x)]_{P_n(\C)}.$ This finishes the proof. \end{proof}

\begin{proof}[Proof of Proposition \ref{Prop: top}]
Let $\overline{\lambda}$ be a loop in $X_{n-3}(\C \setminus \{0,1\})$ based at $\overline{q} = \pi_c(q).$ We first claim that the word norm of $[\overline{\lambda}]$ in $P_{n-3}(\C \setminus \{0,1\}) \cong \pi_1(X_{n-3}(\C \setminus \{0,1\}))$ satisfies \begin{equation}
\label{Equation:bound in C minus two points} |[\overline{\lambda}]|_{P_{n-3}(\C \setminus \{0,1\})} \leq  A_2 \cdot \sum_{\nu \in I} \int_{\overline{\lambda}} |\theta_\nu| + B_2\end{equation} for $A_2,B_2 > 0.$ Proposition \ref{Prop: top} follows immediately from this statement by setting $\overline{\lambda} = \pi_c \circ \lambda,$ since the map $P_n(\C P^1) \to P_{n-3}(\C \setminus \{0,1\})$ induced by $\pi_c$ is a quasi-isometry (note that it is identified with the projection $\Z/2\Z \times P_{n-3}(\C \setminus \{0,1\}) \to P_{n-3}(\C \setminus \{0,1\})$ to the second factor, under the isomorphism given by $c$).

We require the following two lemmas from geometric group theory.

\medskip

\begin{lma}\label{Lemma - adding strands q-i}
The natural map $e: P_{n-3}(\C \setminus \{0,1\}) \to P_{n-1}(\C)$ induced by adding constant strands at the punctures $\{0,1\}$ is a quasi-isometric embedding of groups.
\end{lma}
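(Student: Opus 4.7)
My plan is to exhibit an explicit topological retraction of $X_{n-3}(\C \setminus \{0,1\}) \hookrightarrow X_{n-1}(\C)$, pass to fundamental groups to obtain a group-theoretic retraction of $e$, and then conclude via the general principle that a retract of a finitely generated group along a finitely generated subgroup is automatically quasi-isometrically embedded.

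The retraction will be given by the affine (Möbius) normalization that places the first two points at $0$ and $1$. Concretely, I will define
\[
\rho: X_{n-1}(\C) \to X_{n-3}(\C \setminus \{0,1\}), \qquad \rho(z_1,\dots,z_{n-1}) := \left(\frac{z_3 - z_1}{z_2 - z_1},\ldots,\frac{z_{n-1} - z_1}{z_2 - z_1}\right).
\]
This is well-defined and continuous: the denominator never vanishes because $z_1 \neq z_2$; the output coordinate avoids $0$ and $1$ because $z_j \neq z_1, z_2$ for $j \geq 3$; and distinct $z_j$'s produce distinct outputs. Choosing the basepoint of $X_{n-1}(\C)$ to be $(0,1,w_1,\dots,w_{n-3})$, where $(w_1,\dots,w_{n-3})$ is the basepoint of $X_{n-3}(\C \setminus \{0,1\})$, one checks directly that $\rho$ is a retraction of the natural inclusion
\[
\iota: (w_1,\dots,w_{n-3}) \mapsto (0,1,w_1,\dots,w_{n-3}),
\]
whose induced map on $\pi_1$ is, by definition, the map $e$ obtained by adding constant strands at $0$ and $1$. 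Setting $r_\# := \rho_*$ on fundamental groups then gives $r_\# \circ e = \mathrm{id}_{P_{n-3}(\C \setminus \{0,1\})}$.

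Given $r_\#$, the lemma follows formally. Recall that any homomorphism $f: G \to H$ between finitely generated groups is Lipschitz in the word metric, since a word of length $k$ in generators of $G$ maps to a product of $k$ elements in $H$, each of length at most $\max_{s} |f(s)|_{T}$ over a finite generating set $S$. Applied to $e$, this gives $|e(g)|_{P_{n-1}(\C)} \leq L' \cdot |g|_{P_{n-3}(\C \setminus \{0,1\})}$ automatically. Applied to $r_\#$ with Lipschitz constant $L$, and using the retraction identity, one gets the matching lower bound
\[
|g|_{P_{n-3}(\C \setminus \{0,1\})} = |r_\#(e(g))|_{P_{n-3}(\C \setminus \{0,1\})} \leq L \cdot |e(g)|_{P_{n-1}(\C)}.
\]
There is no serious obstacle in this argument: the only creative point is writing down the correct $\rho$, and the only thing to verify is that the affine normalization genuinely lands in the configuration space with both punctures removed, which is precisely what the three defining inequalities of $X_{n-1}(\C)$ guarantee.
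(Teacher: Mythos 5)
Your argument is correct, and it reaches the conclusion by a genuinely different route than the paper. The paper works algebraically: it places $e$ in the Fadell--Neuwirth exact sequence $1 \to P_{n-3}(\C \setminus \{0,1\}) \to P_{n-1}(\C) \to P_2(\C) \to 1$, observes that the generator of the center of $P_{n-1}(\C)$ maps to a generator of $P_2(\C) \cong \Z$, and uses the resulting central section to split the sequence as a direct product $P_{n-3}(\C \setminus \{0,1\}) \times \Z$, so that $e$ becomes the inclusion of a direct factor. You instead build an explicit based topological retraction $\rho$ of the inclusion of configuration spaces via affine normalization of the first two points, and invoke the standard fact that a finitely generated retract is undistorted. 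The two proofs rest on the same mechanism --- a Lipschitz left inverse to $e$ --- but your construction of that left inverse is geometric rather than algebraic: it does not require knowing that the full twist generates the center of $P_{n-1}(\C)$ and surjects onto $P_2(\C)$, at the price of producing only a retraction rather than the stronger product decomposition (which the paper does not need either). Two harmless points to note: your inclusion places the constant strands in the first two coordinates rather than the last two as in the paper, which only changes $e$ by an inner relabeling automorphism of $P_{n-1}(\C)$ and does not affect the quasi-isometry statement; and one should record that $P_{n-3}(\C \setminus \{0,1\})$ is indeed finitely generated (it is, being a surface pure braid group of a finite-type surface), since the retract-undistortion argument uses word metrics on both groups.
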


\medskip

\begin{lma}\label{Lemma - finite index q-i}
The inclusion $P_{n-1}(\C) \to B_{n-1}(\C)$ is a quasi-isometric embedding of groups.
\end{lma}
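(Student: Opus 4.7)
The plan is to deduce this lemma from the classical geometric group theory fact that any finite index subgroup of a finitely generated group, endowed with its intrinsic word metric, is quasi-isometric to the ambient group. Indeed, the short exact sequence $1 \to P_{n-1}(\C) \to B_{n-1}(\C) \to S_{n-1} \to 1$ recalled earlier in the paper exhibits $P_{n-1}(\C)$ as a normal subgroup of index $(n-1)!$ in $B_{n-1}(\C)$, so this general principle applies.

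To make the constants explicit, I would fix a finite symmetric generating set $S_B$ of $B_{n-1}(\C)$ containing $1$, and a finite set $T \subset B_{n-1}(\C)$ of right coset representatives of $P_{n-1}(\C)$ with $1 \in T$. By the Reidemeister--Schreier rewriting method, a finite generating set $S_P$ for $P_{n-1}(\C)$ is obtained as the set of products $t \, s \, \tau(ts)^{-1}$ lying in $P_{n-1}(\C)$, where $s \in S_B$, $t \in T$, and $\tau(ts) \in T$ denotes the coset representative of $P_{n-1}(\C) \cdot ts$. The lemma then reduces to showing that $|g|_{S_P}$ and $|g|_{S_B}$ are comparable for $g \in P_{n-1}(\C)$.

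The upper bound $|g|_{S_B} \leq C \, |g|_{S_P}$ is immediate with $C = \max_{\sigma \in S_P} |\sigma|_{S_B}$, since $S_P$ is finite. For the reverse inequality, given a minimal $S_B$-word $g = s_1 \cdots s_k$ for $g \in P_{n-1}(\C)$, I would track cosets by setting $t_0 = 1$ and $t_i = \tau(s_1 \cdots s_i)$, noting crucially that $t_k = 1$ because $g \in P_{n-1}(\C)$. The telescoping identity
\[ g \;=\; \prod_{i=1}^k t_{i-1} \, s_i \, t_i^{-1} \]
then expresses $g$ as a product of $k$ elements of $S_P$, giving $|g|_{S_P} \leq |g|_{S_B}$.

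I do not foresee any genuine obstacle: the argument is a routine application of Reidemeister--Schreier rewriting combined with the finite index of $P_{n-1}(\C)$ in $B_{n-1}(\C)$. The only point to verify is that the constants produced depend only on $n$, which is manifest, since the cardinalities of $S_B$, $T$, and hence $S_P$, are all controlled by $n$; this matches the conventions on the constants $A, B, C$ set out in the Terminology paragraph of Section 1.2.1.
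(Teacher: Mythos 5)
Your proposal is correct and follows the same route as the paper: reduce to the general fact that a finite-index subgroup of a finitely generated group is quasi-isometrically embedded. The only difference is that the paper disposes of this in one line by citing a result on co-compact group actions (\cite[Corollary 24]{TopicsGGT}), whereas you prove the general fact from scratch via Reidemeister--Schreier rewriting; your telescoping identity $g = \prod_{i=1}^{k} t_{i-1} s_i t_i^{-1}$ is valid (each factor lies in $S_P$ since $\tau(t_{i-1}s_i)=t_i$, and $t_0=t_k=1$), so the argument goes through and even yields the explicit bound $|g|_{S_P}\leq |g|_{S_B}$. This buys self-containedness and explicit constants at the cost of length; nothing is gained or lost mathematically, since the word metrics in question are only defined up to bi-Lipschitz equivalence anyway.
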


\medskip

Lemma \ref{Lemma - finite index q-i} is a consequence of a general fact on co-compact group actions (\cite[Corollary 24]{TopicsGGT}), as $P_{n-1}$ is a subgroup of finite index in $B_{n-1}.$ Lemma \ref{Lemma - adding strands q-i} is rather special to our case, and hence we provide a proof.

\medskip

\begin{proof}[Proof of Lemma \ref{Lemma - adding strands q-i}]
The map $e: P_{n-3}(\C \setminus \{0,1\}) \to P_{n-1}(\C)$ fits into the following exact sequence \cite{TuraevKassel}:

\[1 \to P_{n-3}(\C \setminus \{0,1\}) \to P_{n-1}(\C) \to P_2(\C) \to 1.\]

Note that $P_2(\C) \cong \Z.$ Moreover the generator $z$ of the center $Z(P_{n-1}(\C))$ of $P_{n-1}(\C)$ maps to a generator $1 \in \Z$ of $P_2(\C).$ Hence mapping $1$ to $z$ determines a section for $P_{n-1}(\C) \to P_2(\C)$ that yields an isomorphism between the above exact sequence and \[1 \to P_{n-3}(\C \setminus \{0,1\}) \to  P_{n-3}(\C \setminus \{0,1\}) \times \Z \to \Z \to 1,\] the first map taking the form $x \mapsto (x,0),$ and the second map being the projection to the second coordinate. The statement follows. \end{proof}


Consider the geometric braid $\overline{\lambda}$ and add two constant strands at $0$ and $1.$ Call the new geometric braid $\overline{\lambda}' \in \pi_1(X_{n-1}(\C), \overline{q} \cup \{0,1\}).$ We show that for any geometric braid $\beta \in \pi_1(X_{n-1}(\C), \overline{q} \cup \{0,1\}),$ its word norm in $B_{n-1}(\C)$ satisfies:

\begin {equation} \label{Equation:bound in C }|[\beta]|_{B_{n-1}(\C)} \leq A_3 \cdot \displaystyle\sum_{1 \leq i < j \leq n-1} \int_{\beta} |\theta'_{ij}| + B_3 \end{equation} for some $A_3, B_3 > 0$ and $\theta'_{ij} = \frac{1}{2\pi} \mathrm{Im}(\frac{d(u_i - u_j)}{u_i - u_j})$ for $1 \leq i,j \leq n-1.$ Note that the forms $\{ \theta'_{\nu'} \}_{\nu' \in I'}$ with $I' = \{(ij)\, |\; 1 \leq i < j \leq n,\; (i,j) \neq (n-2,n-1)\}$ pull back to $\{ \theta_\nu \}_{\nu \in I}$ under the natural embedding $X_{n-3}(\C \setminus \{0,1\}) \to X_{n-1}(\C)$ given by $(u_1,\ldots,u_{n-3}) \mapsto (u_1, \ldots, u_{n-3},0,1),$ and the form $\theta_{n-2,n-1}$ pulls back to the zero form. Hence by Lemmas \ref{Lemma - adding strands q-i} and \ref{Lemma - finite index q-i} the estimate \eqref{Equation:bound in C } implies the estimate \eqref{Equation:bound in C minus two points}.

Note that $\theta'_{ij} = p_{ij}^* \theta$, with $\theta = \frac{1}{2\pi} \mathrm{Im}(\frac{d(u-v)}{u-v}).$ Hence we have $\int_{\beta} |\theta'_{ij}| = \int_{\beta_{ij}} |\theta|$, where
$\beta_{ij} = p_{ij}\circ \beta$, and moreover the following equality holds by the co-area formula (see \cite[Theorem 5.1.12]{MR2427002} or \cite{GambaudoLagrange}).


For almost all $\omega \in S^1$, the quantity $$n_{ij}(\omega) = \#\{ t \in [0,1) | \frac{p_i\circ \beta (t) - p_j \circ \beta (t)}{|p_i\circ \beta(t) - p_j \circ \beta(t)|} = \om \in S^1\}$$
is finite, and defines an $L^1$-function with norm \[\int_{S^1} n_{ij}(\omega) dm(\om) = \int_{\beta} |\theta'_{ij}|\]
for $m$ the Haar (Lebesgue) measure on $S^1$. Note that (cf. \cite{BrandenburskyLpMetrics, BrandenburskyKedra1}) $n_{ij}(\omega)$ is the number of times that the $i$-th strand overcrosses the $j$-th strand in the diagram of the braid $\beta$ obtained by projection in the direction $\om$. 

We claim that there exists a constant $C$ (which depends only on $n$) and $\om \in S^1,$ such that for all $1 \leq i, j \leq n-1$ \[n_{ij}(\om) \leq C \int_{\beta} |\theta'_{ij}|\] and all crossings are transverse. Indeed, if $n_{ij}$ does not have vanishing $L^1$-norm, we estimate by Markov's (or Chebyshev's) inequality (cf. \cite[Section 29.2, Theorem 5]{KolmogFom}) \[m(\{\om \in S^1|\; n_{ij}(\om) \geq C \int_{\beta} |\theta'_{ij}|\}) \leq \frac{1}{C}.\] Hence any $C > (n-1)(n-2)/2$ would be sufficient to ensure that the intersection \[\bigcap_{1 \leq i<j\leq n-1} \{\om \in S^1|\; n_{ij}(\om) \leq C \int_{\beta} |\theta'_{ij}|\}\] has positive measure and hence is non-empty. Moreover, clearly the set of all $\om$ for which all crossings are transverse has full measure.

Hence, from the $\om$-projection diagram of the braid $\beta$ we get a presentation of $\beta$ as a word in the full braid group $B_n(\C)$, generated by say the half-twists, that has exactly one generator for each overcrossing. Hence \[|[\beta]|_{B_{n-1}(\C)} \leq \sum_{i \neq j} n_{ij}(\omega) \leq 2C \sum_{i<j} \int_{\beta}  |\theta_{ij}|.\] This finishes the proof. \end{proof}


\medskip

\begin{proof}[Proof of Proposition \ref{Prop: analysis}]
We proceed to prove the analytic estimate on averages. First we show that for each $\nu \in I$ the integral of $|\til{\theta}_\nu|$ on each of the short paths is universally bounded. 

\medskip

\begin{lma}\label{Lemma:short paths}
For each $\nu \in I$ and $x \in X_n(\C P^1) \setminus Z$ we have $\int_{\gamma'(x)} |\til{\theta}_\nu| \leq C.$ 
\end{lma}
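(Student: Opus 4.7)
The strategy is to pull everything back to the unit interval and exploit that, after the cross-ratio map, each logarithmic form becomes the logarithmic derivative of a rational function of bounded degree.

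First I would parametrize the affine short path in the $u_0 \times \cdots \times u_0$ chart by $z_k(t) = (1-t) q_k + t x_k$ for $k = 1,\ldots,n$ and compute its image under $\pi_c$. Writing
\[
u_i(t) = cr(z_1(t), z_2(t), z_3(t), z_{i+3}(t)) = \frac{(z_1(t) - z_3(t))(z_2(t) - z_{i+3}(t))}{(z_2(t) - z_3(t))(z_1(t) - z_{i+3}(t))},
\]
I observe that each difference $z_a(t) - z_b(t)$ is affine in $t$, so $u_i(t) = p_i(t)/q_i(t)$ is a rational function of degree at most $2$ in both numerator and denominator. Consequently $u_i(t)$, $u_i(t) - 1$, and $u_i(t) - u_j(t)$ are rational functions of $t$ of bounded degree (at most $4$ in numerator and denominator), with the bound depending only on $n$ (and in fact independent of $n$).

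The next step is to express the pullback of each form in a factored form. For each $\nu \in I$, we have $(\gamma'(x))^* \til{\theta}_\nu = \frac{1}{2\pi}\,\mathrm{Im}(dh_\nu/h_\nu)$ where $h_\nu(t)$ is one of the rational functions above. Factoring $h_\nu(t) = c \prod_k (t - a_k)^{\varepsilon_k}$ with $\varepsilon_k \in \{\pm 1\}$, we obtain
\[
\frac{dh_\nu}{h_\nu} = \sum_k \frac{\varepsilon_k}{t - a_k}\, dt,
\]
so that $\int_{\gamma'(x)} |\til{\theta}_\nu| \leq \frac{1}{2\pi} \sum_k \int_0^1 \bigl|\mathrm{Im}\tfrac{1}{t - a_k}\bigr|\, dt$. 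The number of terms $k$ is bounded by a constant depending only on $n$.

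It remains to bound each $\int_0^1 |\mathrm{Im}(1/(t - a))|\, dt$ uniformly in $a$. For $a = \alpha + i\beta$ with $\beta \neq 0$ the integrand is $|\beta|/((t - \alpha)^2 + \beta^2)$, which extends to $\R$ with integral $\pi$; hence $\int_0^1 \leq \pi$. For real $a \notin [0,1]$ the imaginary part vanishes and the integral is $0$. The remaining case — real $a \in [0,1]$ — would correspond to the path $\gamma'(x)$ crossing a partial diagonal, but this is precisely excluded by the choice of $Z$ (extended if necessary to include the negligible set $Z'$ of Lemma \ref{Lemma:W and affine segments}, which is the only mildly delicate point: one must verify that the poles and zeros of each $h_\nu$ correspond to coincidences among the $n$ points, which is immediate from the geometric meaning of the cross-ratio). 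Putting everything together yields the uniform bound $\int_{\gamma'(x)} |\til{\theta}_\nu| \leq C$ with $C$ depending only on $n$ and the basepoint $q$.
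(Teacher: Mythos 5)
Your proof is correct and follows essentially the same route as the paper's: both reduce the estimate to the fact that each $\til{\theta}_\nu$ is a signed sum of a bounded number of forms $\frac{1}{2\pi}\mathrm{Im}(dh/h)$ with $h$ affine along the component-wise affine segment, each contributing at most $\pi$ to the total argument variation (your computation of $\int_0^1 |\beta|\,dt/((t-\alpha)^2+\beta^2) \leq \pi$ is exactly that statement). The paper gets the bounded count directly from the explicit factorizations of $u_i$, $u_i-1$, $u_i-u_j$ into the linear forms $z_a-z_b$ (at most six factors), whereas you factor the resulting rational function of $t$ over $\C$; these are the same factorization, so the arguments coincide.
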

\medskip

\begin{proof}[Proof of Lemma \ref{Lemma:short paths}]
Recall that by definition of $\gamma'(x),$ we work in the chart $\C^n.$ Since $\gamma'(x)$ is a component-wise affine segment, any linear function $h = z_i - z_j$  composed with $\gamma'(x)$ is an affine segment in $\C 
\setminus \{0\}.$ Therefore $\int_{\gamma'(x)} |\frac{dh}{h}| \leq \pi.$ By the definition of $\til{\theta}_\nu$ (see \eqref{Lemma:additivity of log der} below) we obtain that $\int_{\gamma'(x)} |\til{\theta}_\nu| \leq \frac{1}{2\pi}\cdot 6 \cdot \pi = 3$ for all $\nu \in I.$  \end{proof}

By Lemma \ref{Lemma:short paths} is sufficient to give a bound on \[ \int_{X_n(\C P^1)\setminus Z} \left(\int_{\overline{\phi} \cdot x} |\til{\theta}_\nu|\right)  \, d\mu^{\otimes n}(x),\] which by preservation of area and continuity can be rewritten as \[ \int_0^1 \left(\int_{X_n(\C P^1)} |\til{\theta}_\nu(X_t^{\oplus n})|(x)  \, d\mu^{\otimes n}(x)\right) dt.\] We note that this is the only place in the proof that uses area-preservation.

Now note that under the standard stereographic projection the lower hemisphere in $S^2$ is identified with the standard unit disk $\D=\{|z| \leq 1\}$ in $\C$. This embeds as \[H_0=\{[z,1]| |z| \leq 1\}\] in $\C P^1$ under the standard affine chart $u_0: \C \;\tilde{\rightarrow}\;U_0$ containing the point $[0,1]$. Similarly, the upper hemisphere is identified with the subset \[H_\infty=\{[1,w]| |w| \leq 1\}\] of the image of the affine chart $u_\infty: \C \;\tilde{\rightarrow}\; U_\infty$ in $\C P^1$. Moreover, $\C P^1$ is the measure-disjoint union of $H_0$ and $H_\infty$.

Write $(\C P^1)^n = \displaystyle{\bigcup_{\eps \in \{0,\infty\}^n}} H_\eps$ as a measure-disjoint union of products $H_\eps = H_{\eps_1} \times \dots \times H_{\eps_n}$ of hemispheres. Let $e_\eps$ denote the isomorphism $e_\eps=e_{\eps_1} \times \ldots \times e_{\eps_n}: \D^n = \D \times \ldots \times \D \to H_\eps,$ for $e_{\eps_j} = u_{\eps_j}|_{\D}:\D \to H_{\eps_j},\; 1\ \leq j \leq n$ given by the embeddings above. Denote $H'_\eps = H_\eps \cap X_n(\C P^1)$ and $\Delta_\epsilon = e_\eps^{-1}(H_\eps \setminus H'_\eps).$ Put $\overline{e}_\eps= e_\eps|_{H'_\eps}:\D^n\setminus \Delta_\eps \to H'_\eps.$ Then \[\int_{X_n(\C P^1)} |\til{\theta}_\nu(X_t^{\oplus n})|(x)  \, d\mu^{\otimes n}(x) = \sum_{\eps \in \{0,\infty\}^n} \int_{H'_\eps} |\til{\theta}_\nu(X_t^{\oplus n})|(x)  \, d\mu^{\otimes n}(x).\] Write  \[\int_{H'_\eps} |\til{\theta}_\nu(X_t^{\oplus n})|(x)  \, d\mu^{\otimes n}(x) = \int_{\D^n \setminus \Delta_\eps} |(\overline{e}_\eps)^*\til{\theta}_\nu(e_\eps^* X_t^{\oplus n})|(x)  \, d e_\eps^* (\mu^{\otimes n})\]

Note that $e_\eps^* X_t^{\oplus n} = e_{\eps_1}^*X_t \oplus \ldots \oplus e_{\eps_n}^*X_t,$ and that for each $\nu \in I,$ $(\overline{e}_\eps)^* \til{\theta}_\nu = \frac{1}{2\pi} \mathrm{Im}(\frac{df_{\epsilon,\nu}}{f_{\eps,\nu}})$ with $f_{\eps,\nu} = \frac{l_1 l_2}{l_3 l_4}$ or $f_{\eps,\nu} = \frac{l_1 l_2 l_3}{l_4 l_5 l_6}$ with each $l_k$ of the form $a-b$ or $ab -1$ where $a$ and $b$ are natural coordinates on two of the factors in the product $\D^n.$ More precisely, if $(a_1,\ldots,a_n)$ are natural coordinates on $\D^n,$ then $\Delta_\eps = \bigcup_{1 \leq i < j \leq n} \{h_{ij} = 0\},$ with $h_{ij} = a_i - a_j$ if $\eps_i = \eps_j$ and $h_{ij} = a_i a_j -1$ if $\eps_i \neq \eps_j,$ and for each $1 \leq k \leq 6,$ $l_k = h_{ij}$ for some $1\leq i<j \leq n.$
Indeed, this follows immediately from the identities \[cr(x_1,x_2,x_3,x_4) - 1 = -cr(x_2,x_1,x_3,x_4) = \frac{(z_1 w_2 - z_2 w_1) (z_3 w_4 - z_4 w_3)}{(z_2 w_3 - z_3 w_2)(z_1 w_4 - z_4 w_1)}\] and \[cr(x_1,x_2,x_3,x_4) - cr(x_1,x_2,x_3,x_5) = \frac{(z_1 w_3 - z_3 w_1) (z_1 w_2 - z_2 w_1) (z_5 w_4 - z_4 w_5)}{(z_2 w_3 - z_3 w_2)(z_1 w_4 - z_4 w_1)(z_1 w_5 - z_5 w_1)}\] for $x_j = [z_j,w_j]$ in homogeneous coordinates on $\C P^1$ for all $1 \leq j \leq 5.$

We record the formula \begin{equation}\label{Lemma:additivity of log der}
(\overline{e}_\eps)^* \til{\theta}_\nu = \frac{1}{2\pi} \mathrm{Im}(\frac{dl_1}{l_1} + \frac{dl_2}{l_2} + \frac{dl_3}{l_3} - \frac{dl_4}{l_4} - \frac{dl_5}{l_5} - \frac{dl_6}{l_6}),  
\end{equation}
which follows immediately from the above discussion. From \eqref{Lemma:additivity of log der} it follows that it is sufficient to estimate \[\int_{\D \times \D \setminus \{h_{ij} = 0\}} \left | \frac{dh_{ij}}{h_{ij}} \right | (e_{\eps_i}^* X_t \oplus e_{\eps_j}^* X_t) \; (e_{\eps_i} \times e_{\eps_j})^* \, d \mu^{\otimes 2}\] for each $1 \leq i < j \leq n.$
 
The pullback $|\cdot|_{Sph} = e_{\eps}^* |\cdot|_{S^2}$, $\eps \in \{0,\infty\}$ of the metric on the sphere in either of the coordinate charts is equal \[(|\cdot|_{Sph})_{\zeta} = (1+|\zeta|^2)^{-1} \, |\cdot|_{Eucl}.\] Abbreviating $|\cdot|_{Eucl} = |\cdot|$  we therefore have (for all $\zeta \in \D$) \[\frac{1}{2} |\cdot| \leq |\cdot|_{Sph} \leq |\cdot|.\] Hence in order to obtain an estimate via $\int_{\D} |(e_{\eps_k}^* X_t)|_{Sph} \; d\mu$ for $1 \leq k \leq n,$ it is sufficient to estimate via $\int_{\D} |(e_{\eps_k}^* X_t)| \, d\mu.$



Hence it is sufficient to estimate \[\int_{\D^2 \setminus \{a-b = 0\}} \left |\frac{d(a-b)}{a-b} \right |(A_t \oplus B_t) \; d\mu^{\otimes 2}\] or \[\int_{\D^2 \setminus \{ab - 1 = 0\}} \left |\frac{d(ab-1)}{ab-1} \right |(A_t \oplus B_t) \; d\mu^{\otimes 2}.\] Here \[d\mu(\zeta) = 2 (1 + |\zeta|^2)^{-2} dm(\zeta)\] is the pullback of the spherical measure to $\D$ by the any of the maps $e_{\eps_j}$ (note that the map $\C P^1 \to \C P^1$ given by $[z,w] \mapsto [w,z]$ is an isometry of the spherical metric, and hence preserves the volume form), and $A_t, B_t$ are $e_{\eps_i}^* X_t, e_{\eps_j}^* X_t$ for appropriate $i, j.$  

Start with \[\int_{\D^2 \setminus \{a-b = 0\}} \frac{|A_t(a)-B_t(b)|}{|a-b|} d\mu(a)d\mu(b).\]

We apply the triangle inequality $|A_t(a)-B_t(b)| \leq |A_t(a)|+ |B_t(b)|$, and estimate the two resulting terms separately. Since they are estimated analogously, we show the estimate for the first term only. We have
\[\int_\D  \int_\D \frac{|A_t(a)|}{|a-b|} \; d{\mu}(b) d{\mu}(a)= \int_\D |A_t(a)| \left ( \int_\D \frac{1}{|a-b|} d{\mu}(b) \right )  d{\mu}(a) \leq C \cdot  \int_\D |A_t(a)|  \, d{\mu}(a),  \]


 since 

$$  \int_\D \frac{1}{|a-b|} d{\mu}(b) \leq  2 \int_\D \frac{1}{|a-b|} d m(b) \leq  8\pi = C.$$


We continue with
\[\int_{\D^2 \setminus \{ab - 1 = 0\}} \frac{|a B_t(b) - A_t(a) b|}{|ab - 1|} \; d\mu(a)d\mu(b).\]

Using the triangle inequality in the numerator, we estimate the two terms separately. Consider for example the first term. The estimate proceeds analogously to the previous case, the only difference being following calculation. Denoting $a^*=\frac{1}{a}$, we compute
\[\int_\D \frac{|a|}{|ab-1|} d\mu(a) = \int_{\{|a^*| \geq 1\}} \frac{1}{|b-a^*|} d\mu(a^*)
\leq  C.\]

Indeed, write the last integral as the sum of the integrals over the measure-disjoint subsets $\{|a^*| \leq 2\}$ and $\{|a^*| \geq 2 \}$ of $\C.$ Then we estimate

\[\int_{\{|a^*| \leq 2\}} \frac{1}{|b-a^*|}\frac{1}{(1+|a^*|^2)^2} dm(z) \leq \int_{\{|a^*| \leq 2\}} \frac{1}{|b-a^*|} dm(a^*) \leq \]

\[ \leq \int_{\{|b-a^*| \leq 3\}} \frac{1}{|b-a^*|} dm(b-a^*) = 6\cdot \pi,\]

and recalling that $|b| \leq 1,$

\[\int_{\{|a^*| \geq 2\}} \frac{1}{|a^*-b|}\frac{1}{(1+|a^*|^2)^2} dm(a^*) \leq \int_{|a^*| \geq 2} \frac{1}{|a^*|-|b|}\frac{1}{(1+|a^*|^2)^2} dm(a^*) \leq \]

\[ \leq \int_{\{|a^*| \geq 2\}} \frac{2}{|a^*|}\frac{1}{(1+|a^*|^2)^2} dm(a^*) = C_{1} < \infty. \]
This gives us an estimate as required with $C = 12\cdot \pi + 2 C_1$. \end{proof}


\section{Examples of quasimorphisms and bi-Lipschitz embeddings of vector spaces}\label{Section: examples}

For $\alpha\in P_n = P_n(\C)$ we denote by $\widehat{\alpha}$ the $n$-component link which is a closure of $\alpha$, see Figure \ref{fig:braid-closure1}.
\begin{figure}[htb]
\centerline{\includegraphics[height=1.5in]{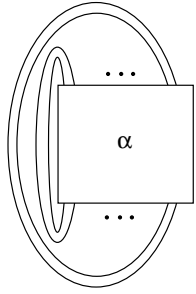}}
\caption{\label{fig:braid-closure1} Closure $\widehat{\alpha}$ of a braid $\alpha$.}
\end{figure}

Let $\sign_n\colon P_n\to \mathbb{Z}$ be a map such that $\sign_n(\alpha)=\sign(\widehat{\alpha})$, where $\sign$ is a signature invariant of links in $\mathbb{R}^3$. Gambaudo-Ghys \cite{GambaudoGhysCommutators} showed that $\sign_n$ defines a quasimorphism on $P_n$ (see \cite{BrandenburskyKnots} for a different proof). We denote by $\overline{\sign}_n\colon P_n\to \mathbb{R}$ the induced homogeneous quasimorphism. Recall that the center of $P_n$ is isomorphic to $\mathbb{Z}$. Let $\Delta_n$ be a generator of the center of $P_n$. It is a well known fact that $P_n(S^2)$ is isomorphic to the quotient of $P_{n-1}$ by the cyclic group $\langle\Delta_{n-1}^2\rangle$, see \cite{Bir}. Let $\lk_n\colon P_n\to \mathbb{Z}$ be a restriction to $P_n$ of a canonical homomorphism from $B_n=B_n(\C)$ to $\mathbb{Z}$ which takes value $1$ on each Artin generator of $B_n$. Let $s_{n-1}\colon P_{n-1}\to \mathbb{R}$ be a homogeneous quasimorphism defined by
$$s_{n-1}(\alpha):=\overline{\sign}_{n-1}(\alpha)-\frac{\overline{\sign}_{n-1}(\Delta_{n-1})}{\lk_{n-1}(\Delta_{n-1})}\lk_{n-1}(\alpha).$$
Since $s_{n-1}(\Delta_{n-1})=0$, the homogeneous quasimorphism $s_{n-1}$ descends to a homogeneous quasimorphism
$\overline{s}_n\colon P_n(S^2)\to \mathbb{R}$. Note that $\overline{s}_2$ and $\overline{s}_3$ are trivial because $P_2(S^2)$ and $P_3(S^2)$ are finite groups.

For each $n\geq 4$ let
$$\overline{\Sign}_n\colon \Diff_0(S^2,\sigma)\to \mathbb{R}$$
be the induced homogeneous quasimorphism.
In \cite[Section 5.3]{GambaudoGhysCommutators} Gambaudo-Ghys evaluated quasimorphisms $\overline{\Sign}_{2n}$ on a family of diffeomorphisms $$f_\o\colon S^2\to S^2,$$
such that $f_\o(\infty)=\infty$ and $f_\o(x)=e^{2i\pi\o(|x|)}x$, here $S^2$ is identified with $\mathbb{C\cup\{\infty\}}$, and
$\o\colon \mathbb{R}_+\to\mathbb{R}$ is a function which is constant in a neighborhood of $0$ and outside
some compact set. Let $a(r)$ be the spherical area (with the normalization $\vol(\C) = 1$) of the disc in $\mathbb{C}$ with radius $r$ centered at $0$. Set $u=1-2a(r)$ and let $\widetilde{\o}(u)=\o(r)$. In \cite[Lemma 5.3]{GambaudoGhysCommutators} Gambaudo-Ghys showed that for each $n\geq 2$
\begin{equation}\label{eq:GG-sign-comp}
\overline{\Sign}_{2n}(f_\o)=\frac{n}{2}\int\limits_{-1}^1(u^{2n-1}-u)\widetilde{\o}(u)du.
\end{equation}


\begin{proof}[Proof of Corollary \ref{Corollary: vector spaces}]
Let $H_\o\colon S^2\to \mathbb{R}$ be a smooth function supported away from the $\{\infty\}$ point and $f_{t,\o}$ be a Hamiltonian flow generated by $H_\o$, such that $f_{1,\o}=f_\o$. Since $f_{t,\o}$ is an autonomous flow, by \eqref{eq:GG-sign-comp} we have
$$
\overline{\Sign}_{2n}(f_{t,\o})=t\frac{n}{2}\int\limits_{-1}^1(u^{2n-1}-u)\widetilde{\o}(u)du.
$$

Let $d\in \mathbb{N}$. It follows from \eqref{eq:GG-sign-comp} that it is straight forward to construct a family of functions $\o_i\colon \mathbb{R}_+\to\mathbb{R}$ and $\{H_{\o_i}\}_{i=1}^d$ supported away from the $\{\infty\}$ point such that
\begin{itemize}
\item
Each Hamiltonian flow $f_{t,\o_i}$ is generated by $H_{\o_i}$ and $f_{1,\o_i}=f_{\o_i}$.
\item
The functions $\{H_{\o_i}\}_{i=1}^d$ have disjoint support and hence the diffeomorphisms $f_{t,\o_i}$ and $f_{s,\o_j}$ commute for all $s,t\in \mathbb{R}$, $1\leq i,j\leq n$.
\item
The $(d\times d)$ matrix
$\left(
              \begin{array}{ccc}
                \overline{\Sign}_{4}(f_{1,\o_1}) & \cdots & \overline{\Sign}_{4}(f_{1,\o_d}) \\
                \vdots & \vdots & \vdots \\
                \overline{\Sign}_{2d+2}(f_{1,\o_1}) & \cdots & \overline{\Sign}_{2d+2}(f_{1,\o_d}) \\
              \end{array}
            \right)
$
is non-singular.
\end{itemize}

It follows that there exists a family $\{\overline{\Phi}_i\}_{i=1}^d$ of homogeneous quasimorphisms on $\Diff_0(S^2,\sigma)$, such that $\overline{\Phi}_i$ is a linear combination of $\overline{\Sign}_{4},\ldots,\overline{\Sign}_{2d+2}$ and
\begin{equation}\label{eq:sign-basis}
\overline{\Phi}_i(f_{t,\o_j})=\left\{
                                          \begin{array}{c}\begin{aligned}
                                            &t  &\rm{if}&\quad i=j\\
                                            &0  &\rm{if}&\quad i\neq j\\
                                            \end{aligned}
                                          \end{array}
                                        \right ..
\end{equation}

Let $I\colon\mathbb{R}^d\to \Diff_0(S^2,\sigma)$ be a map, such that
$$I(v):=f_{v_1,\o_1}\circ\ldots\circ f_{v_d,\o_d}$$
and $v=(v_1,\ldots,v_d)$. It follows from the construction of $\{f_{v_i,\o_i}\}_{i=1}^d$ that $I$ is a monomorphism. Let $A'_p:=\max\limits_i \,{l_p(\{f_{t,\o_i}\}_{0 \leq t \leq 1})}$, then
$$\|f_{v_1,\o_1}\circ\ldots\circ f_{v_d,\o_d}\|_{p} \leq A'_p \|v\|,$$
where $\|v\|=\sum\limits_{i=1}^d |v_i|,$ and $\|\cdot\|_p = d_p(\cdot,1)$ denotes the $L^p$-norm.


All diffeomorphisms $f_{v_1,\o_1},\ldots,f_{v_d,\o_d}$ pair-wise commute. Hence, for each $1\leq i\leq d$, by Corollary \ref{Corollary: GG Lip} and \eqref{eq:sign-basis} we have
$$\|f_{v_1,\o_1}\circ\ldots\circ f_{v_d,\o_d}\|_{p}\geq A_p^{-1}\left|\overline{\Phi}_i(f_{v_1,\o_1}\circ\ldots\circ f_{v_d,\o_d})\right|=
A_p^{-1}\cdot |v_i|\left|\overline{\Phi}_i(f_{1,\o_i})\right|,$$
where $A_p$ is the maximum over the Lipschitz constants (in Corollary \ref{Corollary: GG Lip}) of the functions
$$\overline{\Phi}_i\colon\Diff_0(S^2,\sigma)\to \mathbb{R}.$$
It follows that
$$\|f_{v_1,\o_1}\circ\ldots\circ f_{v_d,\o_d}\|_{p}\geq \left((d\cdot A_p)^{-1}\min_i\left|\overline{\Phi}_i(f_{1,\o_i})\right|\right) \|v\|=
\left((d\cdot A_p)^{-1}\right) \|v\|,$$
and the proof follows. \end{proof}

\appendix

\section{The case of the torus}

By \cite[Theorem 1.2]{BrandenburskyKedra2} and Remark \ref{Remark: Jensen 1 to p}, the following statement implies that the diameter of $(\Diff_0(T^2,dx\wedge dy),d_{L^p})$ is infinite for all $p \geq 1.$

\medskip

\begin{prop}\label{Proposition: torus qi}
The inclusion $(\Ham(T^2,dx\wedge dy),d_{L^1}) \hookrightarrow (\Diff_0(T^2,dx\wedge dy),d_{L^1})$ is a quasi-isometry.
\end{prop}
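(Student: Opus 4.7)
The plan is to exploit the flux exact sequence
\[ 1 \to \Ham(T^2, dx\wedge dy) \to \Diff_0(T^2, dx\wedge dy) \xrightarrow{F} T^2 \to 1, \]
where $F$ is the reduced flux (with flux group $\Lambda \subset H^1(T^2, \R)$ identified with $\Z^2$), split by the rotation section $s:(a,b) \mapsto R_{(a,b)}$. Writing $L:\R^2 \to H^1(T^2,\R)$ for the isomorphism induced by fluxes of straight rotation paths, the crucial preliminary is that the flux of any path $\{\phi_t\}$ is bounded by its $L^1$-length: pairing $\tilde F_1 = \int_0^1 [\iota_{X_t}\omega]\,dt$ against a dual basis of $H_1(T^2)$ produces expressions of the form $\pm \int_0^1 \int_{T^2} X_t^{\bullet}\,d\mu\,dt$ (components of $X_t$), so $|\tilde F_1| \leq C\, l_1(\{\phi_t\})$ for a uniform constant $C$.

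The main step is the bi-Lipschitz bound $d^{\Ham}_{L^1}(\id,\phi) \leq (1+C')\,d^{\Diff_0}_{L^1}(\id,\phi)$ for $\phi \in \Ham(T^2)$. Given a path $\{\phi_t\}$ in $\Diff_0$ from $\id$ to $\phi$ of $L^1$-length $L$, the total flux $\tilde F_1$ lies in $\Lambda$ (since $\phi$ is Hamiltonian), so I would pick $(a,b) \in \Z^2$ with $L(a,b) = \tilde F_1$, noting that $R_{(a,b)} = \id$ on $T^2$. Then I concatenate the straight rotation loop $\alpha_s = R_{-s(a,b)}$ (a loop at $\id$) with the translated path $\beta_t = R_{-(a,b)} \circ \phi_t$ (from $\id$ to $\phi$). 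A direct check, using that rotations are isometries of the flat metric and act trivially on $H^1(T^2)$, shows that $\beta_t$ has the same $L^1$-length and same total flux as $\{\phi_t\}$. Hence the concatenation is Hamiltonian (total flux $-\tilde F_1 + \tilde F_1 = 0$) and has $L^1$-length $|(a,b)| + L \leq (1+C')\,L$, where $C'$ absorbs $C$ together with the norm of $L^{-1}$.

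It remains to establish rough surjectivity: every $\phi \in \Diff_0$ is within bounded $L^1$-distance of $\Ham$. Choosing $(a,b) \in [-1/2,1/2]^2$ representing the flux class of $\phi$ in $T^2 = \R^2/\Z^2$, the rotation path from $\phi$ to $R_{-(a,b)} \circ \phi \in \Ham$ has $L^1$-length $|(a,b)| \leq \sqrt{2}/2$. Combining this with the Lipschitz bound above and the trivial reverse inequality $d^{\Diff_0}_{L^1} \leq d^{\Ham}_{L^1}$ on $\Ham$ yields the quasi-isometry. The principal delicate point will be the flux bookkeeping in the main step: verifying that $\beta_t$ carries flux $+\tilde F_1$ (rather than some cohomologically modified version), which is handled by the standard fact that symplectomorphisms isotopic to the identity act trivially on cohomology, so that the pullback identity $[\iota_{Y_t}\omega] = [R_{(a,b)}^* \iota_{X_t}\omega] = [\iota_{X_t}\omega]$ holds.
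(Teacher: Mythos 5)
There is a genuine gap in your main step, precisely at the sentence ``hence the concatenation is Hamiltonian (total flux $-\tilde F_1 + \tilde F_1 = 0$).'' Vanishing of the \emph{total} flux of an isotopy $\{\psi_t\}$ from $\id$ to $\phi$ only tells you that $\phi \in \Ham(T^2)$ and that $\{\psi_t\}$ is homotopic \emph{rel endpoints} to a Hamiltonian isotopy; it does not make $\{\psi_t\}$ itself a path in $\Ham(T^2)$. Indeed, your concatenation consists of the rotation loop $\alpha_s = R_{-s(a,b)}$, whose generating vector field is a nonzero constant field with nonzero flux class (so $\alpha$ never leaves the coset structure tangentially in a Hamiltonian direction), followed by $\beta_t = R_{-(a,b)}\circ\phi_t = \phi_t$, which is an arbitrary symplectic isotopy with $[\iota_{X_t}\omega]$ not exact at individual times. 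Since the intrinsic metric $d_{L^1}$ on $\Ham(T^2)$ is an infimum over genuine Hamiltonian isotopies (otherwise the bi-Lipschitz half of the proposition would be vacuous), the length of your concatenated path does not bound $d_{L^1}^{\Ham}(\id,\phi)$. The homotopy rel endpoints that converts a zero-total-flux isotopy into a Hamiltonian one gives no control on $L^1$-length, so this cannot be waved away.

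The repair is to correct the path \emph{pointwise in time} rather than only at the endpoint, which is what the paper does: with $\tau$ the rotation section, set $\tau_t = \tau(Flux(\phi_t))$ and $h_t = \tau_t^{-1}\phi_t$. Then $Flux(h_t)=0$ for \emph{every} $t$, so $\{h_t\}$ is an honest path in $\Ham(T^2)$ from $\id$ to $\phi$, and since the $\tau_t$ are measure-preserving isometries, $l_1(\{h_t\}) \leq l_1(\{\tau_t^{-1}\}) + l_1(\{\phi_t\})$. The loop $\{\tau_t^{-1}\}$ is then controlled by exactly the estimate you already established (the flux, i.e.\ the average of the components of the generating field, is bounded in absolute value by the $L^1$-norm of the field), giving $l_1(\{\tau_t^{-1}\}) \leq \sqrt{2}\, l_1(\{\phi_t\})$. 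Your preliminary flux-versus-length estimate and your coarse density argument are both correct and agree with the paper's; only the deployment in the main step needs this change.
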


\medskip

\begin{proof}[Proof of Proposition \ref{Proposition: torus qi}]
We equip the torus $T^2$ with the standard flat Riemannian metric. We use the following instance of the Flux exact sequence (see \cite{BanyagaStructure,MR1698616}): 

\[1\to \Ham(T^2,dx\wedge dy) \xrightarrow{\iota} \Diff_0(T^2,dx \wedge dy) \xrightarrow{Flux} T^2 \to 1.\]

It has the property that the monomorphism $\tau: T^2 \to \Diff_0(T^2,dx \wedge dy)$ given by $\tau(a,b):(x,y) \mapsto (x+b,y-a)$ satisfies $Flux \circ \tau = \id_{T^2}.$ In particular \[\Diff_0(T^2,dx \wedge dy) = \Ham(T^2,dx\wedge dy)\cdot \tau(T^2).\] However, $d_{L^1}(\tau(a,b),\id) \leq \frac{1}{\sqrt{2}}$ for all $(a,b) \in T^2,$ as is verified in an elementary manner. In particular, $\iota:\Ham(T^2,dx\wedge dy) \to \Diff_0(T^2,dx \wedge dy)$ has coarsely dense image.

We proceed to prove that $\iota$ is a bi-Lipschitz group monomorphism. First, $\iota^* d_{L^1} \leq d_{L^1}$ is immediate by definition of the $L^1$-distance. We claim that $c\cdot d_{L^1} \leq \iota^* d_{L^1}$ for some $0 < c < 1.$ By right-invariance, it is sufficient to show that $c\cdot d_{L^1}(h,1) \leq d_{L^1}(\iota(h),1),$ for all $h \in \Ham(T^2,dx \wedge dy).$ Consider a smooth path ${[0,1] \to \Diff_0(T^2,dx \wedge dy),\; t \mapsto g_t,}$ with $g_0 = \id,$ $g_1 = \iota(h).$ Look at the path \[{[0,1] \to \Diff_0(T^2,dx \wedge dy),\; t \mapsto \tau_t = \tau\circ Flux (g_t)}.\] Notice that in fact it is a loop based at $\id \in \Diff_0(T^2,dx \wedge dy).$ We shall prove the following estimate of $L^1$-lengths. 

\medskip

\begin{clm}\label{Claim: length estimate torus}
$ l_1(\{\tau_t^{-1}\}) \leq c_0 \cdot l_1(\{g_t\})$ for some $c_0 > 0.$
\end{clm}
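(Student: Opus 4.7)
The crux of the claim is that $\{\tau_t^{-1}\}$ is a path of translations of $T^2$, and translations are isometries of the flat metric. Hence the right-generating vector field of $\{\tau_t^{-1}\}$ is constant on $T^2$, and its magnitude can be compared to the $L^1$-norm of the generating vector field $X_t^g$ of $\{g_t\}$ by Minkowski's inequality after identifying it, via the Flux homomorphism, with the spatial average of $X_t^g$.

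More concretely, I would first lift $\mathrm{Flux}(g_t) \in T^2$ continuously to a path $(a_t, b_t) \in \R^2$ starting at the origin, so that $\tau_t = \tau(a_t, b_t)$. Using the group law $\tau(a, b) \circ \tau(a', b') = \tau(a + a', b + b')$, a direct calculation gives $\tau_s^{-1} \circ \tau_t = \tau(a_t - a_s, b_t - b_s)$. Differentiating at $s = t$ and using the convention $\tau(\alpha, \beta):(x,y) \mapsto (x + \beta, y - \alpha)$, the right-generating vector field of $\{\tau_t^{-1}\}$ is the constant vector field $V_t := -\dot{b}_t \,\partial_x + \dot{a}_t \,\partial_y$, of norm $\sqrt{\dot{a}_t^2 + \dot{b}_t^2}$ at every point. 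Therefore $l_1(\{\tau_t^{-1}\}) = \int_0^1 \sqrt{\dot{a}_t^2 + \dot{b}_t^2}\, dt$.

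Next, I would identify $(\dot{a}_t, \dot{b}_t)$ with the spatial average of $X_t^g$. Writing $X_t^g = P_t \partial_x + Q_t \partial_y$, we have $\iota_{X_t^g}(dx \wedge dy) = P_t\, dy - Q_t\, dx$, a closed form whose harmonic representative on the flat torus is $\overline{P}_t\, dy - \overline{Q}_t\, dx$, where $\overline{f} := \frac{1}{\vol(T^2)} \int_{T^2} f\, dx\, dy$. The standard formula for Flux gives $\frac{d}{dt}\mathrm{Flux}(g_t) = [\iota_{X_t^g}(dx \wedge dy)]$ in $H^1(T^2, \R)$. Matching conventions (using that $\mathrm{Flux}(\tau_s)|_{s=1} = (a, b)$ for $\tau_s = \tau(sa, sb)$) then yields $\dot{a}_t = -\overline{Q}_t$ and $\dot{b}_t = \overline{P}_t$.

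Finally, applying Minkowski's inequality to the $\R^2$-valued function $X_t^g$ (well-defined globally on $T^2$ via the flat coordinates),
\[ \sqrt{\dot{a}_t^2 + \dot{b}_t^2} \;=\; \bigl| (\overline{P}_t, \overline{Q}_t) \bigr| \;=\; \left| \frac{1}{\vol(T^2)} \int_{T^2} X_t^g\, d\mu \right| \;\leq\; \frac{1}{\vol(T^2)} \int_{T^2} |X_t^g|\, d\mu. \]
Integrating in $t$ gives $l_1(\{\tau_t^{-1}\}) \leq l_1(\{g_t\})$, so the claim holds with $c_0 = 1$. There is no substantive obstacle here beyond keeping track of the sign conventions in the Flux formula and the parameterization of $\tau$; both are completely routine once the correct identification of $H^1(T^2, \R) \cong \R^2$ is fixed.
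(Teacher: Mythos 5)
Your proposal is correct and follows essentially the same route as the paper: identify, via the Flux homomorphism, the (constant) generating vector field of the translation path with the spatial average of the generating field of $\{g_t\}$, then bound the norm of the average by the average of the norm. The only difference is cosmetic — you apply the vector-valued Minkowski inequality directly and get $c_0=1$, while the paper estimates componentwise via $|\langle f\rangle|\leq |f|_{L^1}$ and settles for $c_0=\sqrt{2}$.
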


We defer the proof of this claim to the end of the section. Define the path \[[0,1] \to \Ham(T^2,dx \wedge dy),\; t \mapsto h_t = \iota^{-1}(\tau_t^{-1} g_t),\] with $h_0 = \id,$ $h_1 = h.$ Then, since $\tau_t$ are isometries, we see that \[l_1(\{h_t\}) \leq l_1(\{\tau_t^{-1}\}) + l_1(\{g_t\}) \leq (1+c_0) \cdot l_1(\{g_t\}),\] by Claim \ref{Claim: length estimate torus}. This finishes the proof, with $c = (1 + c_0)^{-1}.$\end{proof}


\begin{proof}[Proof of Claim \ref{Claim: length estimate torus}]
First of all, since $\tau_t$ are isometries, $l_1(\{\tau_t^{-1}\}) = l_1(\{\tau_t\}).$ Let $Y_t = a_t(x,y) \, \partial_x + b_t(x,y) \, \partial_y$ be the time-dependent symplectic vector field generating $\{g_t\}.$ Denote for $f \in \sm{T^2,\R},$ its average by $\langle f \rangle = \int_{T^2} f \, dx \wedge dy$ (our area form has total area $1$). We record that \begin{equation}\label{Equation: average estimate}
|\langle f \rangle| \leq |f|_{L^1}.
\end{equation}

It follows quickly from the definition of $Flux$ (and an explicit characterization of exact $1$-forms on $T^2$) that the vector field $Z_t = \langle a_t \rangle \, \partial_x + \langle b_t \rangle \, \partial_y$ generates $\tau_t.$ Hence by \eqref{Equation: average estimate} we have \[|Z_t|_{L^1} \leq \sqrt{2}\cdot |Y_t|_{L^1}\] for each $0 \leq t \leq 1.$ Hence $ l_1(\{\tau_t\}) \leq \sqrt{2} \cdot l_1(\{g_t\}),$ finishing the proof with $c_0 = \sqrt{2}.$ \end{proof}

\bibliographystyle{amsplain}
\bibliography{LpMetricTwoSphereRefs}

\vspace{3mm}

\textsc{Michael Brandenbusrky, Department of Mathematics, Ben Gurion University, Be’er Sheva 84105, Israel}\\
\emph{E-mail address:} \verb"brandens@math.bgu.ac.il"

\vspace{3mm}

\textsc{Egor Shelukhin, Institute for Advanced Study, Einstein Drive, Princeton NJ 08540, USA}\\
\emph{E-mail address:} \verb"egorshel@ias.edu"

\end{document}